\newcommand{\mailto}[1]{\href{mailto:#1}{#1}}
\declaretheorem[
	style=plain,
	name=Theorem,
	numberwithin=section,
	refname={Theorem,Theorems},
	Refname={Theorem,Theorems}
]{Thm}
\declaretheorem[
	style=plain,
	name=Proposition,
	numberlike=Thm,
	refname={Proposition,Propositions},
	Refname={Proposition,Propositions}
]{Prop}
\declaretheorem[
	style=plain,
	name=Corollary,
	numberlike=Thm,
	refname={Corollary,Corollaries},
	Refname={Corollary,Corollaries}
]{Cor}
\declaretheorem[
	style=plain,
	name=Conjecture,
	numberlike=Thm,
	refname={Conjecture,Conjectures},
	Refname={Conjecture,Conjectures}
]{Conj}
\declaretheorem[
style=plain,
name=Lemma,
numberlike=Thm,
refname={Lemma,Lemmas},
Refname={Lemma,Lemmas}
]{Lem}
\declaretheorem[
style=definition,
name=Definition,
numberlike=Thm,
refname={Definition,Definitions},
Refname={Definition,Definitions}
]{Def}
\declaretheorem[
style=definition,
name=Notation,
numberlike=Thm,
refname={Notation,Notations},
Refname={Notation,Notations}
]{Not}
\newcommand{\wt}{\mathrm{wt}}
\newcommand{\mot}{\mathfrak{m}}
\newcommand{\lmot}{\mathfrak{L}}
\newcommand{\R}{\mathbb{R}}
\newcommand{\Q}{\mathbb{Q}}
\DeclareMathOperator*{\per}{per}
\DeclareMathOperator*{\id}{id}
\newcommand{\isom}{\cong}
\renewcommand{\vec}[1]{{\underline{\boldsymbol{#1}}}}
\proof\endcsname{\itshape}{\bfseries}{}{}
\patchcmd{\@settitle}{\bfseries}{}{}{}
\DeclareMathSymbol{\lsb@l}{\mathalpha}{letters}{`l}
\begin{document}

	\hypersetup{
		pdfinfo = {
			Title = {zeta(\{\{2\}\textasciicircum{}m, 1, \{2\}\textasciicircum{}m, 3\}\textasciicircum{}n, \{2\}\textasciicircum{}m) / pi\textasciicircum{}(4n + 2m(2n+1)) is rational},
			Author = {Steven Charlton}
		}
	}

	\title{\( \zeta(\{ \, \{2\}^m, 1, \{2\}^m, 3 \}^n, \{2\}^m) / \pi^{4n + 2m(2n+1)} \) is rational}
	\date{\today}

	\author{Steven Charlton}
	\address{Departement of Mathematical Sciences \\
		Durham University \\
		Science Laboratories \\
		South Road \\
		Durham \\
		DH1 3LE \\
		United Kingdom}

	\email{\mailto{steven.charlton@durham.ac.uk}}
	
	\begin{abstract}
		The cyclic insertion conjecture of Borwein, Bradley, Broadhurst and Lison\v{e}k states that inserting all cyclic shifts of some fixed blocks of 2's into the multiple zeta value \( \zeta(1,3,\ldots,1,3) \) gives an explicit rational multiple of a power of \( \pi \).  In this paper we use motivic multiple zeta values to establish a non-explicit symmetric insertion result: inserting all possible permutations of some fixed blocks of 2's into \( \zeta(1,3,\ldots,1,3) \) gives some rational multiple of a power of \( \pi \).
	\end{abstract}
	
	\thanks{I would like to thank Herbert Gangl for many useful comments and suggestions which have improved the exposition.}
	
	\maketitle
	
	\section{Introduction}
	
		In Equation 18 of \cite[][p. 4]{EvalEZ}, \citeauthor{EvalEZ} give a conjectural evaluation of a two parameter family of multiple zeta values:
		\begin{equation}\label{eqn:BBBEval}
			\zeta(\{ \, \{2\}^m, 1, \{2\}^m, 3\}^n, \{2\}^m) \overset{?}{=} \frac{1}{2n+1} \frac{\pi^\wt}{(\wt+1)!}
		\, , \end{equation}
		where I write `\( \wt \)' as shorthand for the weight of the multiple zeta value, which here is equal to \( 4n + 2m(2n+1) \).
		
		Throughout this paper we will abbreviate multiple zeta value to MZV, and keep with the convention that means \( \zeta(1,2) \) is a convergent MZV.  We will make use of the notation
		\[
			\{ s_1, s_2, \ldots, s_k \}^l \coloneqq \underbrace{s_1, s_2, \ldots, s_k, \, \ldots , \, s_1, s_2, \ldots, s_k}_\text{{\( l \) copies of \( s_1, s_2, \ldots, s_k \)}}
		\, , \]
		to write repeated arguments.  Also, for integers \( b_0, b_1, \ldots, b_{2n+1} \geq 0 \), set
		\[
			Z(b_0, b_1, \ldots, b_{2n}) \coloneqq \zeta\big(\{2\}^{b_0}, 1, \{2\}^{b_1}, 3, \ldots, \{2\}^{b_{2n-2}}, 1, \{2\}^{b_{2n-1}}, 3, \{2\}^{b_{2n}}\big)
		\, , \]
		which is obtained by inserting \( \{2\}^{b_i} \) after the \( i \)-th term of \( \{1, 3\}^n \).
		
		Throughout \cite{CombAspectsMZV}, \citeauthor{CombAspectsMZV} present numerical evidence for a cyclic insertion conjecture, Conjecture 1 in \cite[][p. 9]{CombAspectsMZV}, which generalises the above family.  Their conjecture can be given as follows:
		\begin{Conj}[Cyclic Insertion]
			For given integers \( a_0, a_1, \ldots, a_{2n} \geq 0 \)
			\[
				\sum_{\mathclap{r \in C_{2n+1}}} \, Z(a_{r(0)}, a_{r(1)}, \ldots, a_{r(2n)}) \overset{?}{=} \frac{\pi^\wt}{(\wt+1)!}
			\, , \]
			where \( C_{2n+1} \) is the cyclic group of order \( 2n+1 \), acting naturally by cyclically shifting the indices \( 0, 1, \ldots, 2n \) of the \( a_i \)'s.  So all cyclic shifts of the fixed blocks \( \{2\}^{a_0} \), \( \{2\}^{a_1} \), \ldots, \( \{2\}^{a_{2n}} \) are inserted into \( \{1,3\}^n \).
		\end{Conj}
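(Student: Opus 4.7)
The plan is to promote the identity to the motivic level and apply Brown's theorem: any element $\xi \in \mathcal{H}$ of weight $w$ whose infinitesimal derivations $D_{2r+1}\xi$ all vanish modulo products (for admissible $r$) must be a rational multiple of $\zeta^\mot(w)$, whose period is a rational multiple of $\pi^{w}$ when $w$ is even. Writing $Z^\mot$ for the motivic lift of $Z$, it therefore suffices to prove that
\[
S^\mot(\vec{a}) \coloneqq \sum_{r \in C_{2n+1}} Z^\mot(a_{r(0)}, \ldots, a_{r(2n)})
\]
is annihilated by every such $D_{2r+1}$; identifying the rational coefficient as $1/(\wt+1)!$ requires an additional step, to which I return at the end.

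Each $Z^\mot(b_0, \ldots, b_{2n})$ is realised as a motivic iterated integral on the binary word obtained by concatenating blocks $(10)^{b_i}$ with the single letters $1$ and $001$ that encode the $1$'s and $3$'s of $\{1,3\}^n$. Goncharov's formula writes $D_{2r+1}$ as a sum over sub-intervals of length $2r+2$ in this word, each contributing an elementary depth-one motivic MZV tensored with the motivic MZV on the complement. The $1$'s and $3$'s act as markers that punctuate the $(10)^{a_i}$-blocks, and sub-intervals can be classified by which markers they span and which blocks they cut through.

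The decisive obstacle is to produce a sign-reversing pairing on the contributions to $D_{2r+1} S^\mot(\vec{a})$ matching each term generated by one cyclic shift with an equal-and-opposite term generated by another. In the symmetric-group sum which is the subject of the present paper, such pairings are plentiful because $S_{2n+1}$ is large enough to trade any two indices; for the cyclic group $C_{2n+1}$, by contrast, only $2n+1$ permutations are available, and the required involution must therefore be engineered with considerably more care. A natural starting point is to pair sub-intervals under a combined rotation of both the integration word and the block labels, playing the near-rotational structure of the word against the genuine rotational action of $C_{2n+1}$, but whether this or some further combinatorial device suffices is, as far as I can see, the main new ingredient needed beyond the symmetric-group argument.

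Assuming the vanishing can be established, Brown's theorem gives $S^\mot(\vec{a}) = q(\vec{a}) \, \zeta^\mot(\wt)$ with $q(\vec{a}) \in \Q$, and passing to periods yields $S(\vec a) \in \Q \cdot \pi^{\wt}$. Pinning down $q(\vec a) = 1/(\wt+1)!$ requires showing that $q(\vec{a})$ depends only on the total weight $\wt = 4n + 2\lvert\vec{a}\rvert$ (and not on the individual $a_i$'s), after which the value can be read off from the specialisation $\vec a = \vec 0$ via Zagier's evaluation $\zeta(\{1,3\}^n) = 2\pi^{4n}/(4n+2)!$; the base case $n = 0$ is Euler's classical $\zeta(\{2\}^{a_0}) = \pi^{2a_0}/(2a_0+1)!$. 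This uniformity in the $a_i$'s is itself non-trivial, since specialising to $a_0 = \cdots = a_{2n}$ would then recover \eqref{eqn:BBBEval}, and is likely to demand either a more refined analysis of the depth filtration on $\mathcal{H}$ or an inductive reduction on $\lvert\vec a\rvert$ compatible with the coaction computation of the previous steps.
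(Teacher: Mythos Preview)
The statement you are attempting to prove is presented in the paper as a \emph{conjecture}, not as a theorem; the paper offers no proof of it, and indeed the question mark over the equals sign is there precisely because no proof is known. The paper's own contribution is the strictly weaker Symmetric Insertion result, which replaces the cyclic group \( C_{2n+1} \) by the full symmetric group \( S_{2n+1} \) and, even then, establishes only the non-explicit conclusion \( S \in \pi^{\wt}\Q \) rather than the exact value \( \pi^{\wt}/(\wt+1)! \).

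Your proposal is not a proof either, and to your credit you say so. You correctly identify both missing ingredients: (i) a sign-reversing involution on the terms of \( D_{2r+1} \) that works with only \( 2n+1 \) cyclic rotations available, rather than the \( (2n+1)! \) permutations the paper's argument relies on; and (ii) a mechanism for identifying the rational constant as exactly \( 1/(\wt+1)! \), which the motivic method by itself cannot supply. The paper's involution \( \phi \) reverses a contiguous block \( b_s, b_{s+1}, \ldots, b_t \) of the index vector, and such a reversal is essentially never a cyclic rotation, so the argument genuinely does not transfer. Your suggestion of combining a rotation of the word with a rotation of the labels is a reasonable first thought, but as you note, it is not clear that it closes, and no such device appears in the literature. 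In short: the gaps you flag are real, they are exactly why Cyclic Insertion remains open, and there is no proof in the paper to compare against.
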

		
		In \cite{AlgCombMZV}, \citeauthor{AlgCombMZV} succeed in proving:
		\begin{Thm}[Bowman-Bradley, {\cite[Theorem 5.1 in][p. 19]{AlgCombMZV}}]
			For given integers \( n, m \geq 0 \)
			\[
				\sum_{\mathclap{\substack{j_0 + j_1 + \cdots + j_{2n} = m \\ j_0, j_1, \ldots, j_{2n} \geq 0}}} \, Z(j_0, j_1, \ldots, j_{2n}) = \frac{1}{2n+1} \binom{m+2n}{m} \frac{\pi^\wt}{(\wt + 1)!}
			\, . \]
			So all blocks \( \{2\}^{j_0} \), \( \{2\}^{j_1} \), \ldots, \( \{2\}^{j_{2n}} \) corresponding to compositions\footnote{Strictly speaking these are \emph{weak} compositions since some of the terms may be 0, but for ease of use I will just call them compositions.} \( \sum_{k=0}^{2n} j_k = m \) of \( m \) into \( 2n+1 \) parts are inserted into \( \{1,3\}^n \).
		\end{Thm}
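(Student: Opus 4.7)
The natural approach is to package the left-hand side into a single generating function and then exploit the iterated integral representation of MZVs to obtain a closed form. Introduce
\[
\Phi_n(t) \coloneqq \sum_{m \geq 0} t^{2m} \sum_{\substack{j_0 + \cdots + j_{2n} = m \\ j_i \geq 0}} Z(j_0, \ldots, j_{2n}),
\]
so that the theorem becomes the explicit identity
\[
\Phi_n(t) = \frac{1}{2n+1} \sum_{m \geq 0} \binom{m+2n}{m} \frac{\pi^{4n+2m} t^{2m}}{(4n+2m+1)!}.
\]

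I would then use the iterated integral representation with \( \omega_0 = dt/t \) and \( \omega_1 = dt/(1-t) \): each \( Z(\vec{j}) \) is an iterated integral over \( [0,1] \) of the word \( (\omega_0\omega_1)^{j_0} \omega_1 (\omega_0\omega_1)^{j_1} \omega_0\omega_0\omega_1 \cdots (\omega_0\omega_1)^{j_{2n}} \), where the fixed ``skeleton'' portion encodes the pattern \( \{1,3\}^n \) and the blocks \( (\omega_0\omega_1)^{j_i} \) encode the inserted \( \{2\}^{j_i} \) at the \( 2n+1 \) available insertion slots. Formally summing the \( t^{2m} \)-weighted series over all \( \vec{j} \) replaces each slot by \( \sum_{j \geq 0}(t^2\omega_0\omega_1)^j \), producing a single generating-function integral that one aims to evaluate.

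The evaluation could be carried out either by induction on \( n \) --- with the base case \( n=0 \) being Euler's identity \( \sum_m \zeta(\{2\}^m) t^{2m} = \sum_m \pi^{2m}t^{2m}/(2m+1)! \), and the inductive step peeling off one outermost \( \{1,3\} \) pair of the skeleton to relate \( \Phi_n \) to \( \Phi_{n-1} \) --- or by recognising the generating-function integral as a Selberg/beta-type integral over a simplex and evaluating in one go. In either case, the combinatorial prefactor \( \binom{m+2n}{m}/(2n+1) \) should arise from the bookkeeping of compositions. The main obstacle will be making the formal geometric-series manipulation rigorous, since iterated integrals do not form a commutative concatenation algebra: the series \( \sum_j (t^2\omega_0\omega_1)^j \) within a single slot only becomes meaningful once one specifies how it interacts with its neighbouring skeletal letters. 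The cleanest workaround is to first establish the identity at the level of words in the Hoffman algebra \( \mathfrak{H}^1 \) as one of formal power series in \( t \), and only then apply the MZV evaluation map \( \zeta \colon \mathfrak{H}^1 \to \R \); extracting the coefficient of \( t^{2m} \) on both sides of the resulting closed form then yields the theorem.
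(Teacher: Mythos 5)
The paper does not prove this theorem; it cites it as Theorem~5.1 of Bowman--Bradley and mentions the later proofs of Zhao and Muneta. (The paper's own contribution, proved via the motivic coaction operators \( D_{2k+1} \), is the strictly weaker, non-explicit Symmetric Insertion proposition, which cannot recover the explicit constant \( \tfrac{1}{2n+1}\binom{m+2n}{m} \).) So there is no ``paper's own proof'' to compare against, and your attempt must be judged on whether it stands on its own.

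As written, the proposal has a genuine gap: it sets up a generating function \( \Phi_n(t) \) and then asserts that it ``should'' be evaluable, without carrying out the evaluation. The step ``replace each slot by \( \sum_{j\geq 0}(t^2\omega_0\omega_1)^j \)'' is purely formal and does not simplify anything: concatenation of iterated-integral words is not a product of integrals, so geometric-series resummation inside a slot does not produce a closed-form kernel; the object you obtain is exactly the formal power series you started with. Both escape routes you suggest are undeveloped and, I believe, do not work as stated. The ``peel off one outermost \( \{1,3\} \) pair'' induction does not reduce \( \Phi_n \) to \( \Phi_{n-1} \), because deleting the outermost insertion slots and a \( \{1,3\} \) from a nested MZV does not factor the remaining iterated integral, nor does it respect the composition structure; there is no visible recursion producing the prefactor \( \tfrac{1}{2n+1}\binom{m+2n}{m} \). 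The Selberg-integral suggestion is likewise a hope rather than an argument: after weighting by \( t^{2j_i} \) the measure on the simplex has no product/determinantal structure. Finally, the proposed rigor-fix of ``establish the identity at the level of words in \( \mathfrak{H}^1 \) as a formal power series in \( t \)'' is a category error: the right-hand side \( \tfrac{1}{2n+1}\binom{m+2n}{m}\pi^{\wt}/(\wt+1)! = \tfrac{1}{2n+1}\binom{m+2n}{m}\,\zeta(\{2\}^{2n+m}) \) is not a word, and the identity certainly fails in the free concatenation algebra \( \mathfrak{H}^1 \); it only holds modulo the shuffle and stuffle relations. The actual content of the known proofs is precisely the shuffle/stuffle bookkeeping (Bowman--Bradley's original argument, Zhao's exotic shuffle relation, Muneta's algebraic version) needed to reduce the left-hand side to \( \zeta(\{2\}^{2n+m}) \), and none of that machinery appears in your outline. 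In short: correct target, plausible framing, but the entire middle of the proof is missing, and the specific devices proposed to fill it in would not succeed.
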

		Simpler and more refined proofs of this result have since been given by \citeauthor{ZhaoExoticShuffle} \cite{ZhaoExoticShuffle} and \citeauthor{MunetaEvalMZV} \cite{MunetaEvalMZV}.
		
		This result is compatible with the cyclic insertion conjecture.  Any composition \( \sum_{k=0}^{2n} j_k = m \) of \( m \) into \( 2n+1 \) parts remains a composition of \( m \) into \( 2n+1 \) parts when cyclically shifted.  Hence the terms in the Bowman-Bradley sum can be re-grouped into subsums, where each subsum is taken over a set of compositions which differ by a cyclic shift.  Conjecturally, each of these subsums is then a rational multiple of \( \pi^\wt \); explicitly it should be \( \frac{\alpha}{2n+1} \frac{\pi^\wt}{(\wt+1)!} \), where \( \alpha \) is the number of distinct compositions obtained by cyclically shifting a representative composition appearing in this subsum.  So on average each of the \( \binom{m+2n}{m} \) compositions contributes \( \frac{1}{2n+1} \frac{\pi^\wt}{(\wt+1)!} \), giving a total which agrees with the above. \medskip
		
		In this paper we will use Brown's motivic MZV framework \cite{BrownMTM, DecompMot} to prove the non-explicit version of a `symmetric insertion' result:
		\begin{Prop}[Symmetric Insertion]
			For given integers \( a_0, a_1, \ldots, a_{2n} \geq 0 \)
			\[
				\sum_{\mathclap{\sigma \in S_{2n+1}}} \, Z(a_{\sigma(0)}, a_{\sigma(1)}, \ldots, a_{\sigma(2n)}) \in \pi^\wt \Q
			\, , \]
			where \( S_{2n+1} \) is the symmetric group on the \( 2n+1 \) letters \( 0, 1, \ldots, 2n \).  So all possible permutations of the fixed blocks \( \{2\}^{a_i} \) are inserted into \( \{ 1, 3 \}^n \).
		\end{Prop}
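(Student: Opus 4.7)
The plan is to pass to Brown's motivic framework \cite{BrownMTM, DecompMot}. Let $Z^\mot(b_0, \ldots, b_{2n}) \in \mathcal{H}$ be the motivic lift of $Z(b_0, \ldots, b_{2n})$, obtained as a motivic iterated integral $I^\mot(0; w; 1)$ on $\mathbb{P}^1 \setminus \{0, 1, \infty\}$, where the word $w$ is built from the block encoding $2 \mapsto 10$, $1 \mapsto 1$, $3 \mapsto 100$. Since the period of $\zeta^\mot(2)^{\wt/2}$ is a non-zero rational multiple of $\pi^\wt$, the Proposition will follow from the motivic refinement
\[
	\sum_{\sigma \in S_{2n+1}} Z^\mot(a_{\sigma(0)}, \ldots, a_{\sigma(2n)}) \in \Q \cdot \zeta^\mot(2)^{\wt/2}.
\]
By Brown's criterion this reduces to showing that the motivic derivations $D_{2r+1}$, for all $r \geq 1$ with $2r + 1 < \wt$, annihilate the symmetric sum in $\mathcal{L}_{2r+1} \otimes \mathcal{H}$.

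The next step is to unfold $D_{2r+1}$ via Goncharov's formula, which expresses it as a sum over subintervals of length $2r+1$ of tensor products $I^\lmot \otimes I^\mot$. Applied to a single $Z^\mot$, these subintervals split naturally into cases by their placement relative to the block structure of $w$: entirely inside a block $\{2\}^{b_i}$, straddling exactly one of the $1$ or $3$ separators, or straddling two or more separators. For each placement, the left factor $I^\lmot$ depends only on the few $b_i$ meeting the window, while the right factor $I^\mot$ is an iterated integral whose untouched blocks keep their sizes.

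The cancellation argument I would pursue then groups the terms of the symmetric sum by the placement and by the values of the $b_i$'s touching the window. The inner sum, over permutations acting only on the untouched blocks, converts the right factor into a symmetrised insertion of lower complexity, to which induction on the number of blocks may be applied. The outer sum, over arrangements of the touching $b_i$'s themselves, should then pair up transposition-related terms so that, after applying standard $\mathcal{L}$-coalgebra relations (such as $I^\lmot(x; \ldots; x) = 0$, the shuffle identities, and depth reduction), everything cancels. The base case of the induction is $n = 0$, where $Z^\mot(b_0) = \zeta^\mot(\{2\}^{b_0})$ is already a rational multiple of $\zeta^\mot(2)^{b_0}$.

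The main obstacle will be handling the subintervals that straddle two or more separators: there the left factor is not a standard MZV, and the right factor is not of the form $Z^\mot$. To close the induction one very likely needs to enlarge the family of symmetrised motivic integrals under consideration so that it is stable under every $D_{2r+1}$, perhaps by allowing the $1$ and $3$ separators to be replaced by more general fragments with suitable symmetrisation. Identifying such a family and verifying closure is where the main conceptual difficulty lies.
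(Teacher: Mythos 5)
Your framework is exactly right: lift to Brown's motivic MZVs, reduce the claim to $D_{2r+1}$ annihilating the symmetric sum, and invoke the kernel theorem $\ker D_{<N} = \Q\,\zeta^\mot(N)$ in weight $N$. That reduction matches the paper. But the cancellation mechanism you propose is not carried out, and you acknowledge as much: the case analysis by placement of the $(2r+3)$-window relative to separators, followed by induction on the number of blocks, runs aground precisely when the window straddles two or more separators, because the left and right factors leave the family of $Z^\mot$'s. Postulating an enlargement of the family closed under all $D_{2r+1}$ is the hard step, and nothing in the proposal identifies it or verifies closure. As written this is a strategy sketch with a gap at its center rather than a proof.

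The paper avoids induction entirely by exhibiting an explicit fixed-point-free involution on the nonzero terms of $D_{2r+1}$ applied to the symmetric sum. The key observation is a more symmetric rewriting of the binary word: $Z^\mot(b_0,\ldots,b_{2n})$ corresponds to $I^\mot$ of the string $(01)^{b_0+1}(10)^{b_1+1}\cdots(01)^{b_{2n}+1}$, a concatenation of $2n+1$ alternating blocks. A nontrivial odd-length subsequence is then recorded as a tuple $(\vec{b}; s,l; t,m)$ giving the word, the block index $s$ where it starts with offset $l$, and the block index $t$ where it ends with offset $m$; nontriviality forces $s$ and $t$ to have opposite parity, odd length forces $l$ and $m$ to have opposite parity. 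The involution $\phi$ reverses the segment of $\vec{b}$ from position $s$ to position $t$ (which is again a permutation of $\vec{a}$, hence appears in the symmetric sum) and swaps $l \leftrightarrow m$. One checks that $\phi$ has no fixed points (else $l=m$, contradicting opposite parity), that the subsequence of $\phi(\alpha)$ is the reversal of that of $\alpha$, and that the two quotient sequences coincide. Reversal of paths then gives a sign $(-1)^{\text{odd}} = -1$ on the left factor, so the two terms of each orbit cancel, killing $D_{2r+1}$ in one stroke with no induction and no enlarged family. This pairing — a reflection of the block vector tied to the two endpoints of the cut — is the missing idea.
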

		This result sits at an intermediate level between the cyclic insertion conjecture and the Bowman-Bradley theorem.
		
		Any permutation of a composition of \( m \) into \( 2n+1 \) parts remains a composition of \( m \) into \( 2n+1 \) parts, so the Bowman-Bradley sum breaks up into subsums, each over the compositions which differ by a permutation.  By symmetric insertion each of these subsums \emph{is} a rational multiple of \( \pi^\wt \).  
		
		On the other hand, by choosing representatives of the cosets of \( S_{2n+1} / \langle (0 \, 1 \cdots 2n) \rangle \), the sum over \( S_{2n+1} \) breaks up into \( (2n)! \) sums over \( C_{2n+1} \isom \langle (0 \, 1 \cdots 2n) \rangle \).  By the cyclic insertion conjecture, each of these subsums is equal to \( \frac{\pi^\wt}{(\wt+1)!} \), giving the total as \((2n)! \frac{\pi^\wt}{(\wt+1)!} \in \pi^\wt \Q\), so this result is compatible with cyclic insertion. \medskip
	
		As a corollary to symmetric insertion, by setting \( a_0 = a_1 = \cdots = a_{2n} = m \), it will follow that
		\[
			\zeta(\{ \, \{2\}^m, 1, \{ 2^m \}, 3 \}^n, \{2\}^m) \in \pi^\wt \Q
		\, , \]
		that is, a `weak version' of the conjectural evaluation in \autoref{eqn:BBBEval} holds.\medskip
		
		\paragraph{\bfseries{Acknowledgements.}} This work began to take shape thanks to Brown's and Gangl's Multiple Zeta Values lecture series during the Grothendieck-Teichm\"uller Groups, Deformation and Operads programme at the Isaac Newton Institute.  I am grateful to these lecturers and to the organisers of the GDO programme.  I am also grateful to the INI for providing financial support covering the cost of travel to the lectures.  This work was done with the support of Durham Doctoral Scholarship funding.
		
	\section{Motivic Multiple Zeta Values}
	
		In Section 2 of \cite{GaloisSymGon}, Goncharov shows how the classical iterated integrals
		\[ I(a_0; a_1, \ldots, a_n; a_{n+1}) \]
		can be lifted to motivic iterated integrals 
		\[
			I^\mot(a_0; a_1, \ldots, a_n; a_{n+1}) 
		\, , \]
		with new algebraic structure.  This structure comes in the form of a coproduct \( \Delta \), explicitly computed in Theorem 1.2 of \cite[][p. 3]{GaloisSymGon}, making the motivic iterated integrals into a Hopf algebra.
	
		In Section 2 of \cite{BrownMTM}, Brown further lifts Goncharov's motivic iterated integrals, in such a way that \( I^\mot(0; 1, 0; 1) \) and the corresponding motivic MZV \( \zeta^\mot(2) \) are non-zero.  More generally Definition 3.6 of \cite[][p. 8]{DecompMot} defines a motivic MZV as
		\[
			\zeta^\mot(n_1, n_2, \ldots, n_r) \coloneqq (-1)^r I^\mot(0; \underbrace{1, 0, \ldots, 0}_{\text{\( n_1 \) terms}}, \, \underbrace{1, 0, \ldots, 0}_{\text{\( n_2 \) terms}}, \, \ldots \, , \underbrace{1, 0, \ldots, 0}_{\text{\( n_r \) terms}}; 1)
		\, , \]
		in analogy with the Kontsevich integral representation of an MZV, Section 9 in \cite{MZVApp}.  
		
		Brown's motivic MZVs form a graded coalgebra, denoted \( \mathcal{H} \).  The period map
		\begin{equation}\label{eqn:permap}
			\begin{aligned}
				\per \colon \mathcal{H} & \to \R \\
			 	I^\mot(a_0; a_1, \ldots, a_n; a_{n+1}) &\mapsto I(a_0; a_1, \ldots, a_n; a_{n+1})
			\end{aligned}
		\end{equation}
		defines a ring homomorphism from the graded coalgebra \( \mathcal{H} \) to \( \R \), see Equation 2.11 in \cite[][p. 4]{BrownMTM} and Equation 3.8 in \cite[][p. 7]{DecompMot}.  This means any identities between motivic MZVs descend to the same identities between ordinary MZVs.\medskip
		
		Theorem 2.4 of \cite[][p. 6]{BrownMTM} shows that Goncharov's coproduct lifts to a coaction \( \Delta \colon \mathcal{H} \to \mathcal{A} \otimes_\Q\mathcal{H} \) on Brown's motivic MZVs, where \( \mathcal{A} \coloneqq \mathcal{H} / \zeta^\mot(2)\mathcal{H} \) kills \( \zeta^\mot(2) \).  In Section 5 of \cite{DecompMot}, Brown describes an algorithm for decomposing motivic MZVs into a chosen basis using an infinitesimal version of this coaction \( \Delta \colon \mathcal{H} \to \mathcal{A} \otimes_\Q \mathcal{H} \).
		
		The infinitesimal coaction factors through the operators
		\[
			D_r \colon \mathcal{H}_N \to \mathcal{L}_r \otimes_\Q \mathcal{H}_{N-r}
		\, , \]
		where \( \mathcal{L}_r \) is the degree \( r \) component of \( \mathcal{L} \coloneqq \mathcal{A}_{>0} / \mathcal{A}_{>0}\mathcal{A}_{>0} \), the Lie coalgebra of indecomposables, and \( \mathcal{H}_{N} \) is the degree \( N \) component of \( \mathcal{H} \).  The action of \( D_r \) on the motivic iterated integral \( I^\mot(a_0; a_1, \ldots, a_n; a_{n+1}) \) is given explicitly by
		\[
			\sum_{p=0}^{n-r} I^\lmot(a_p; a_{p+1}, \ldots, a_{p+r}; a_{p+r+1}) \otimes I^\mot(a_0; a_1, \ldots, a_p, a_{p+r+1}, \ldots, a_n; a_{n+1})
		\, , \]
		according to Equation 3.4 of \cite[][p. 8]{BrownMTM}.
		
		The operators \( D_r \) have a pictorial interpretation similar to that of Goncharov's coproduct and the coaction above.  One can view \( D_r \) as cutting segments of length \( r \) out of a semicircular polygon whose vertices are decorated by \( a_0, a_1, \ldots, a_n, a_{n+1} \):
		\begin{center}
			\vskip1em
			\includegraphics[scale=0.75]{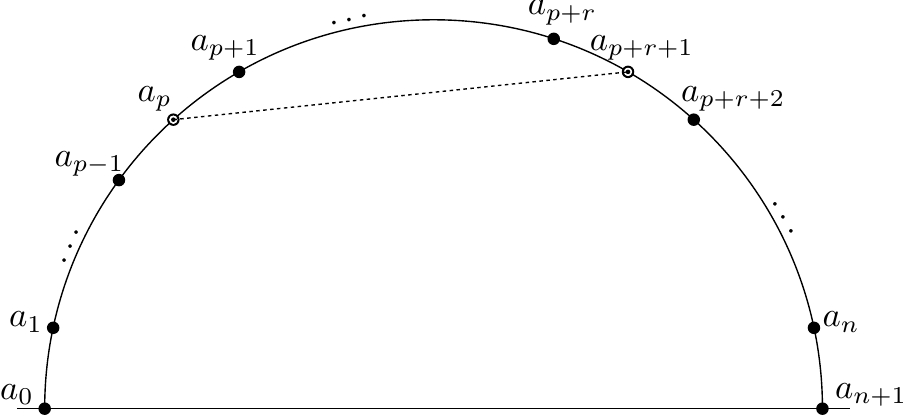}
			\vskip1em
		\end{center}
		Notice that the \emph{boundary terms} \( a_p \) and \( a_{p+r+1} \) appear in both the left and right hand factors of \( D_r \), they are part of both the main polygon and the cut-off segment above.
		
		One could also see the operators \( D_r \) as cutting out strings of length \( r \) from the sequence \( (a_0; a_1, \ldots, a_n; a_{n+1}) \).  Following Brown, Definition 4.4 in \cite[][p. 11]{DecompMot}, we call sequence
		\[
			(a_p; a_{p+1}, \ldots, a_{p+r}; a_{p+r+1})
		\]
		appearing in the left factor of \( D_r \) the \emph{subsequence}, and we call the sequence
		\[
			(a_0; a_1, \ldots, a_p, a_{p+r+1}, \ldots, a_n; a_{n+1})
		\]
		appearing in the right factor of \( D_r \) the \emph{quotient sequence} of the original sequence.  Again the boundary terms \( a_p \) and \( a_{p+r+1} \) are part of both the subsequence and the quotient sequence. \medskip
		
		When decomposing a motivic MZV into a basis, the operator \( D_{2k+1} \) is used to extract the coefficient of \( \zeta^\mot(2k+1) \) as a polynomial in this basis, see Section 5 of \cite{DecompMot}.  The upshot of this comes from Theorem 3.3 of \cite[][p. 9]{BrownMTM}: 
		\begin{Thm}\label{thm:kerDN}
			The kernel of \( D_{<N} \coloneqq \bigoplus_{3 \leq 2k + 1 < N} D_{2k+1} \) is \( \zeta^\mot(N) \Q \) in weight \( N \).
		\end{Thm}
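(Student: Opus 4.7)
The plan is to prove both inclusions: $\zeta^\mot(N)\Q \subseteq \ker D_{<N}$ by direct computation, and the reverse inclusion $\ker D_{<N} \subseteq \zeta^\mot(N)\Q$ by invoking the global structure of $\mathcal{H}$.

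First, for $\zeta^\mot(N)\Q \subseteq \ker D_{<N}$, I would apply the explicit subsequence/quotient formula for $D_{2k+1}$ to the defining word of $\zeta^\mot(N) = -I^\mot(0; 1, 0, \ldots, 0; 1)$ (with $N-1$ trailing zeros). Since the underlying word contains a single $1$ at position $1$ and otherwise only $0$'s, every length-$(2k+1)$ subsequence extracted has one of the forms $(0; 1, 0^{2k}; 0)$ (cut at the extreme left), $(1; 0^{2k+1}; 0)$, $(0; 0^{2k+1}; 0)$, or $(0; 0^{2k+1}; 1)$ (cut at the extreme right). The shuffle-regularization identities $I^\lmot(a; w; a) = 0$ in $\mathcal{L}$ for $a \in \{0,1\}$ annihilate the first and third forms, while the trailing-zero convention $I^\lmot(0; 0, \ldots, 0; 1) = 0$ and its path-reversed partner kill the second and fourth. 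Hence $D_{2k+1}(\zeta^\mot(N)) = 0$ for every $3 \le 2k+1 < N$.

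For the reverse inclusion, the plan is to invoke Brown's structure theorem that $\mathcal{H}$ admits a (non-canonical) isomorphism of graded comodules with $\Q\langle f_3, f_5, f_7, \ldots\rangle \otimes_\Q \Q[f_2]$, where $|f_j| = j$ and the coaction deconcatenates the letters $f_{2k+1}$ from the left. Under this isomorphism, $D_{2k+1}$ becomes the operation of stripping off a leftmost $f_{2k+1}$ and placing it into the $\mathcal{L}_{2k+1}$-factor. An element in weight $N$ annihilated by every $D_{2k+1}$ with $3 \le 2k+1 < N$ therefore corresponds to a weight-$N$ word that begins with no such letter; the only such words are the single-letter word $f_N$ (when $N$ is odd) and the constant word $f_2^{N/2}$ (when $N$ is even), both of which identify with rational multiples of $\zeta^\mot(N)$.

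The main obstacle is the reverse inclusion, which depends essentially on Brown's structure theorem for $\mathcal{H}$. Proving that theorem requires the motivic dimension upper bound $\dim \mathcal{H}_N \le d_N$ of Deligne--Goncharov--Terasoma (from the theory of mixed Tate motives over $\Z$, with $d_N = d_{N-2} + d_{N-3}$), together with Brown's matching lower bound obtained from the Hoffman basis via his decomposition algorithm, which itself is built on $D_{<N}$. The apparent circularity is resolved by running both the Hoffman basis theorem and \autoref{thm:kerDN} as a single induction on weight, with the motivic dimension bound providing the external anchor.
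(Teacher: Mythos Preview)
The paper does not prove this theorem at all; it is quoted verbatim as Theorem 3.3 of \cite[][p.~9]{BrownMTM} and used as a black box in the proof of symmetric insertion. There is therefore no ``paper's own proof'' to compare your proposal against.

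That said, your outline is a faithful sketch of Brown's original argument. The easy inclusion \( \zeta^\mot(N)\Q \subseteq \ker D_{<N} \) does follow from the explicit formula for \( D_{2k+1} \) applied to the word \( (0;1,0,\ldots,0;1) \), using the vanishing of \( I^\lmot \) on words with equal endpoints and on words whose interior letters are all equal. The hard inclusion is, as you say, essentially equivalent to Brown's structure theorem identifying \( \mathcal{H} \) with \( \Q\langle f_3,f_5,\ldots\rangle \otimes_\Q \Q[f_2] \) as a graded comodule, and your observation that the apparent circularity is resolved by running the Hoffman-basis linear-independence argument and the kernel computation as a single induction on weight, with the Deligne--Goncharov--Terasoma upper bound as external input, is exactly how the proof is organised in \cite{BrownMTM} and \cite{DecompMot}.
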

		In other words, if the operators \( D_{2k+1} \), for \( k \) such that \( 3 \leq 2k+1 < N \), all vanish on a given combination of motivic MZVs of weight \( N \), then this combination is a rational multiple of \( \zeta^\mot(N) \).  This will be the main tool in our proof of symmetric insertion.  \medskip
		
		Before we continue we need to recall a few properties of motivic iterated integrals which will be used in the proof, see Section 2.4 of \cite{BrownMTM} for a complete list of properties.  We need:
		\begin{itemize}
			\item \( I^\mot(a_0; a_1, \ldots, a_n; a_{n+1}) = 0 \)  if \( n \geq 1 \) and \( a_0 = a_{n+1} \), and
			\item \( I^\mot(0; a_1, \ldots, a_n; 1) = (-1)^n I^\mot(1; a_n, \ldots, a_1; 0) \).
		\end{itemize}
		We will refer to these properties as the \emph{vanishing because the boundaries are equal}, and \emph{reversal of paths} respectively.
		
		\section{Symmetric Insertion}
		
			\begin{Prop}[Symmetric Insertion]\label{prop:symins}
				For given integers \( a_0, a_1, \ldots, a_{2n} \geq 0 \)
				\[
					\sum_{\mathclap{\sigma \in S_{2n+1}}} \, Z(a_{\sigma(0)}, a_{\sigma(1)}, \ldots, a_{\sigma(2n)}) \in \pi^\wt \Q
				\, , \]
				where \( S_{2n+1} \) is the symmetric group on the \( 2n+1 \) letters \( 0, 1, \ldots, 2n \).  So all possible permutations of the fixed blocks \( \{2\}^{a_i} \) are inserted into \( \{ 1, 3 \}^n \).
			\end{Prop}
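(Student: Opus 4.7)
The plan is to lift the identity to motivic multiple zeta values and invoke \autoref{thm:kerDN}. Define
\[
	S^\mot \coloneqq \sum_{\sigma \in S_{2n+1}} Z^\mot(a_{\sigma(0)}, a_{\sigma(1)}, \ldots, a_{\sigma(2n)}) \in \mathcal{H}_\wt
\, , \]
where \( Z^\mot \) is defined exactly as \( Z \), but with \( \zeta \) replaced by \( \zeta^\mot \) throughout.  It suffices to show that \( D_{2k+1}(S^\mot) = 0 \) for every \( k \) with \( 3 \leq 2k+1 < \wt \); then by \autoref{thm:kerDN} we have \( S^\mot = c\, \zeta^\mot(\wt) \) for some \( c \in \Q \).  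Applying the period map gives \( S = c\, \zeta(\wt) \), and since \( \wt = 2\sum_i a_i + 4n \) is even, the standard Euler evaluation puts \( \zeta(\wt) \in \pi^\wt \Q \), yielding the claim.

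To compute \( D_{2k+1} \) I would first write each \( Z^\mot(a_{\sigma(0)}, \ldots, a_{\sigma(2n)}) \) as a single motivic iterated integral \( I^\mot(0; w_\sigma ; 1) \), where \( w_\sigma \) is the \( \{0,1\} \)-word obtained by concatenating the blocks \( (1,0)^{a_{\sigma(i)}} \) with the intervening letters \( 1 \) (from the \( 1 \)'s in \( \{1,3\}^n \)) and \( 1,0,0 \) (from the \( 3 \)'s in \( \{1,3\}^n \)).  The explicit formula for \( D_{2k+1} \) then expresses \( D_{2k+1}(S^\mot) \) as a double sum — over permutations \( \sigma \in S_{2n+1} \) and over cut positions \( p \) in the word \( w_\sigma \) — of a subsequence factor \( I^\lmot \) of weight \( 2k+1 \) tensored with a quotient-sequence factor in \( \mathcal{H}_{\wt - 2k - 1} \).

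The core combinatorial step is then to group the cuts by the \emph{type} of subsequence they produce, which is controlled by where the cut falls relative to the block structure (entirely inside one \( (1,0)^{a_i} \) block, straddling a block boundary, passing across one or more of the interleaved \( 1 \) or \( 1,0,0 \) letters, \emph{etc.}).  For each such type I would show that the contributing quotient sequences either vanish individually by the \emph{boundary equality} property — this kills many cuts whose endpoints are both \( 0 \)'s or both \( 1 \)'s — or pair up under a bijection on \( S_{2n+1} \) (swapping two \( a_i \)'s, or composing with a suitable reflection) whose effect on the subsequence is to apply \emph{reversal of paths}, so that the two subsequences coincide in \( \mathcal{L}_{2k+1} \) while the two quotient sequences appear with opposite signs.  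The full symmetry of the sum over \( S_{2n+1} \) is what makes these pairings available simultaneously for every cut type.

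The main obstacle is this bookkeeping: one must enumerate the cut types exhaustively, verify that the appropriate transposition of block indices really does send each subsequence to its reverse (taking into account the fixed \( 1 \) and \( 1,0,0 \) scaffolding which is \emph{not} permuted), and confirm that the induced signs on quotient sequences match the sign produced by reversal, so that the paired contributions cancel.  Once this case analysis is complete, \( D_{2k+1}(S^\mot) = 0 \) for all admissible \( k \), and the argument in the first paragraph delivers \( S \in \pi^\wt \Q \).
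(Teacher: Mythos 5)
Your high-level framework matches the paper exactly: lift \( S \) to \( S^\mot \), show each \( D_{2k+1} S^\mot = 0 \), invoke \autoref{thm:kerDN} to conclude \( S^\mot \in \zeta^\mot(\wt)\Q \), apply the period map, and finish with Euler's evaluation. You have also correctly identified the nature of the cancellation mechanism: trivial cuts (where the two boundary symbols agree) vanish automatically, and the remaining cuts should pair off under some symmetry of the index set combined with reversal of paths. That is the right skeleton.

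However, the proposal stops short precisely at the step that constitutes the actual mathematical content, and what you do sketch of it is not quite right. You write that the pairing is given by ``swapping two \( a_i \)'s, or composing with a suitable reflection,'' but the map the paper uses is not a transposition: it reverses the \emph{entire contiguous segment} \( [a_s, a_{s+1}, \ldots, a_t] \) of the block-size vector, where \( s \) and \( t \) are the blocks in which the cut begins and ends, and simultaneously swaps the offsets \( l \) and \( m \) into those blocks. To even state this map cleanly one needs the block decomposition of the binary word as \( (01)^{b_0+1} \mid (10)^{b_1+1} \mid (01)^{b_2+1} \mid \cdots \mid (01)^{b_{2n}+1} \); your description of the word as blocks \( (1,0)^{a_i} \) interleaved with fixed ``scaffolding'' \( 1 \) and \( 1,0,0 \) obscures the alternating \( (01)/(10) \) structure that makes the segment reversal work. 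Without this normalisation, the case analysis you are deferring is genuinely hard, because it is not at all clear which cuts pair with which.

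You also have the mechanism of the cancellation the wrong way round: you say the two subsequences \emph{coincide} in \( \mathcal{L}_{2k+1} \) while the quotient sequences appear with opposite signs. In fact the two subsequences are \emph{reverses} of each other, so reversal of paths on an odd-length interior gives \( I^\lmot(X) = -I^\lmot(Y) \); it is the two \emph{quotient} sequences that coincide (a separate and non-obvious fact, proved by noting that blocks \( s \) and \( t \) contribute a single alternating string of length \( l+m+2 \) to the quotient in both cases, while blocks outside \( [s,t] \) are unchanged). Likewise, it is the \emph{subsequence} factor, not the quotient-sequence factor, that vanishes when the boundary symbols agree. Finally, one must verify the pairing has no fixed points --- this follows because \( l \) and \( m \) have different parity for an odd-length cut, so swapping them is never the identity --- another point your sketch does not address. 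These are not mere bookkeeping: they are the lemmas that make the argument go through, and until they are carried out the proposal is an outline rather than a proof.
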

			
			\begin{proof}[Strategy of Proof]\let\qed\relax
				We will put
				\[
					S \coloneqq \sum_{\mathclap{\sigma \in S_{2n+1}}} \, Z(a_{\sigma(0)}, a_{\sigma(1)}, \ldots, a_{\sigma(2n)})
				\, , \]
				and lift this to
				\[
					S^\mot \coloneqq \sum_{\mathclap{\sigma \in S_{2n+1}}} \, Z^\mot(a_{\sigma(0)}, a_{\sigma(1)}, \ldots, a_{\sigma(2n)})
				\]
				on the level of motivic MZVs.  Here \( Z^\mot \) is obvious motivic version of \( Z \) given by replacing \( \zeta \) with \( \zeta^\mot \) in the definition.  The strategy is then to show a corresponding result on the motivic level first.  
				
				For each \( k \), we will show that the terms in \( D_{2k+1} S^\mot \) cancel pairwise, meaning each \( D_{2k+1} S^\mot \) is identically 0.  From \autoref{thm:kerDN} on the kernel of \( D_{<N} \) above, it follows that \( S^\mot = q \zeta^\mot(\wt) \), for some \( q \in \Q \).  Applying the period map gives this on the level of real numbers, and Euler's evaluation of \( \zeta(2k) \) shows \( S \in \pi^\wt \Q \).
			\end{proof}
			
			The proof of this proposition will proceed by a series of lemmas, the main work is in showing \( D_{2k+1} S^\mot = 0 \).  \medskip
			
			Lifting to motivic MZVs, we have by definition
			\begin{align*}
				Z^\mot(b_0, b_1, \ldots, b_{2n}) &= \zeta^\mot(\{2\}^{b_0}, 1, \{2\}^{b_1}, 3, \ldots, 1, \{2\}^{b_{2n-1}}, 3, \{2\}^{b_{2n}}) \\
					&= \pm I^\mot(0; (10)^{b_0} \, 1 \, (10)^{b_1} \, 100 \cdots 1 \, (10)^{b_{2n-1}} \, 100  \, (10)^{b_{2n}}; 1)
			\, , \end{align*}
			where the sign depends only on the depth of the MZV.  This sign is the same under any permutation of the \( b_i \), so we can safely ignore it.  Ultimately it will pull through \( D_{2k+1} \), since \( D_{2k+1} \) is linear.
			
			\begin{Def}
				The string
				\[
					0 \, (10)^{b_0} \, 1 \, (10)^{b_1} \, 100 \cdots 1 \, (10)^{b_{2n-1}} \, 100  \, (10)^{b_{2n}} \, 1
				\]
				which (after ignoring all commas and semicolons) appears as the argument of \( I^\mot \) above is the \emph{binary word} for the corresponding (motivic) MZV \( Z^\mot(b_0, b_1, \ldots, b_{2n}) \).
			\end{Def}
			
			\begin{Lem}
				The binary word for \( Z^\mot(b_0, b_1, \ldots, b_{2n}) \) can be decomposed into blocks and written more symmetrically as
				\[
					(01)^{b_0+1} \mid (10)^{b_1+1} \mid (01)^{b_2+1} \mid (10)^{b_3+1} \mid \cdots \mid (01)^{b_{2n}+1}
				\, . \]
				
				\begin{proof}
					We insert breaks, written \( \mid \), into the binary word above.  Insert a break directly after the 1 in the binary word 1 which encodes the argument 1 between \( \{2\}^{b_i} \) and \( \{2\}^{b_{i+1}} \).  Also insert a break after the 10 in the binary word 100 which encodes the argument 3 between \( \{2\}^{b_{i+1}} \) and \( \{2\}^{b_{i+2}} \).
					
					Between arguments 1 and 3 inclusive, the word looks like
					\[
						\cdots \, 1 \, (10)^{b_i} \, 100 \, \cdots
					\, . \]
					Inserting these breaks gives
					\[
						\cdots \, 1 \mid (10)^{b_i} \, 10 \mid 0 \, \cdots
					\, , \]
					and the block in the middle is \( (10)^{b_i+1} \).
					
					Between arguments 3 and 1 inclusive, the word looks like
					\[
						\cdots \, 100 \, (10)^{b_j} \, 1 \, \cdots
					\, . \]
					Inserting the breaks gives
					\[
						\cdots \, 10 \mid 0 \, (10)^{b_j} \, 1 \mid \cdots
					\, , \]
					and the middle block is \( (01)^{b_j+1} \).
					
					This pattern holds at the start of the word since
					\[
						0 \, (10)^{b_0} \, 1 \, \cdots \quad \text{becomes} \quad 0 \, (10)^{b_0} \, 1 \mid \cdots
					\, , \]
					and it holds at the end of the word since
					\[
						\cdots 100 \, (10)^{b_{2n}} \, 1 \quad \text{becomes} \quad \cdots 10 \mid 0 \, (10)^{b_{2n}} \, 1
					\, . \]
					
					Thus the entire word may be written
					\[
						(01)^{b_0+1} \mid (10)^{b_1+1} \mid (01)^{b_2+1} \mid (10)^{b_3+1} \mid \cdots \mid (01)^{b_{2n}+1}
					\]
					as claimed.
				\end{proof}
			\end{Lem}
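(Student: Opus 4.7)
The plan is to verify the claim by specifying explicit positions for the \( 2n \) breaks and then checking block by block that the resulting \( 2n+1 \) substrings match the claimed form. Recall that the binary word reads
\[
0 \, (10)^{b_0} \, 1 \, (10)^{b_1} \, 100 \, (10)^{b_2} \, 1 \, (10)^{b_3} \, 100 \cdots 1 \, (10)^{b_{2n-1}} \, 100 \, (10)^{b_{2n}} \, 1 ,
\]
in which each internal ``\( 1 \)'' encodes an argument \( 1 \), each ``\( 100 \)'' encodes an argument \( 3 \), and these alternate along the sequence as \( 1,3,1,3,\ldots \).

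First I would place the breaks: a break immediately after every ``\( 1 \)'' that encodes an argument \( 1 \), and a break inside every ``\( 100 \)'' that encodes an argument \( 3 \), splitting it as ``\( 10 \mid 0 \)''. This gives \( n \) breaks of each type, hence \( 2n \) breaks in all, producing \( 2n+1 \) blocks indexed by \( 0, 1, \ldots, 2n \).

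Next I would verify the block shapes by a local calculation. Away from the boundary, block \( i \) is cut out of one of two patterns: the pattern \( 1 \, (10)^{b_i} \, 100 \) (when the flanking arguments are \( 1 \) on the left and \( 3 \) on the right) yields the middle segment \( (10)^{b_i} \, 10 = (10)^{b_i+1} \); the pattern \( 100 \, (10)^{b_i} \, 1 \) (flanking arguments \( 3 \) on the left and \( 1 \) on the right) yields the middle segment \( 0 \, (10)^{b_i} \, 1 = (01)^{b_i+1} \). Since the argument sequence alternates \( 1,3,1,3,\ldots \), these two cases alternate, giving \( (10)^{b_i+1} \) for odd \( i \) and \( (01)^{b_i+1} \) for even \( i \). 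The two boundary blocks are \( 0 \, (10)^{b_0} \, 1 = (01)^{b_0+1} \) and \( 0 \, (10)^{b_{2n}} \, 1 = (01)^{b_{2n}+1} \), matching the expected parity.

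I do not anticipate any real obstacle: this is a purely combinatorial bookkeeping lemma, amounting to a change of bracketing. The only care needed is at the two endpoints, where one must check that the extremal letters ``\( 0 \)'' and ``\( 1 \)'' of the original word get absorbed into the first and last blocks so that both come out as \( (01) \)-patterns rather than \( (10) \)-patterns, consistently with the alternation forced by the interior blocks.
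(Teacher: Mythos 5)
Your proof is correct and takes essentially the same route as the paper: the same choice of break positions (after each ``\(1\)'' encoding an argument \(1\), and splitting each ``\(100\)'' as ``\(10 \mid 0\)''), followed by the same local block-by-block verification including the two boundary cases. The only cosmetic difference is that you make the alternation by parity of \(i\) explicit, which the paper leaves implicit in the final displayed formula.
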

			
			\begin{Not}
				We will identify a word of the form \( (01)^{b_0+1} \, (10)^{b_1+1} \, \cdots \, (01)^{b_{2n}+1} \) by giving the vector \( \vec{b} = [b_0, b_1, \ldots, b_{2n}] \) which determines the sizes of the blocks.  We will refer to this vector itself as the \emph{word}, and write \( I^\mot(\vec{b}) \) for the corresponding motivic iterated integral.
			\end{Not}
			
			Our goal is to compute \( D_{2k+1} S^\mot \) for each \(k \) such that \( 3 \leq 2k+1 < \wt \), and show it is identically 0.  We compute \( D_{2k+1} \) by marking out subsequences of length \( 2k+3 \) on each iterated integral in the sum \( S^\mot \).  (Remember the subsequence also includes the boundary terms.) \medskip
			
			We are going to give a more algebraic way of encoding the subsequences, so we can be sure the terms in \( D_{2k+1} S^\mot \) all cancel.
			
			\begin{Not} Encode each odd length subsequence by giving:
			\begin{itemize}
				\item the word \( \vec{b} = [b_0, b_1, \ldots, b_{2n}] \) it is taken from,
				\item the block number \( s \) it starts in (counting from 0),
				\item the number of symbols \( l \) in block \( s \) before the beginning of the subsequence,
				\item the block number \( t \) it finishes in, and
				\item the number of symbols \( m \) in block \( t \) after the end of the sequence.
			\end{itemize}
			\end{Not}
			
			For example, the subsequence
			\[
				0101 \mid 10 \underbracket{\boldsymbol{101010 \mid 01 \mid 1010101010 \mid 010}}_{\text{subsequence}} 101
			\]
			is taken from the word \( \vec{b} = [1, 3, 0, 4, 2] \).  It starts in block \( s = 1 \), with \( l = 2 \) symbols before the subsequence begins.  It finishes in block \( t = 4 \), and there are \( m = 3 \) symbols after the subsequence ends.  We encode it as \( ([1,3,0,4,2]; 1, 2; 4, 3) \). \medskip
			
			In this encoding we obviously have \( s \leq t \), as a sequence cannot finish before it starts.  We also have \( l < 2(b_s+1) \) and \( m < 2(b_t+1) \).  These conditions come from the fact that there are strictly fewer symbols before the start of a subsequence than there are symbols in the block, and similarly for the end.  In the case \( s = t \), we should also have a condition like \( l + m < 2b_s \), as the subsequence has length \( > 0 \), but for us this possibility does not arise.\medskip
			
			\begin{Def}
				If the boundary symbols (the start and end symbols) of a subsequence are the same, we will call the subsequence \emph{trivial}, because the tensor of motivic iterated integrals it corresponds to in \( D_{2k+1} \) is automatically 0.  
			\end{Def}
			
			Some facts about non-trivial odd length subsequences and their encodings:
			
			\begin{Lem}\label{lem:lmparity}
				A subsequence has odd length if and only if \( l \) and \( m \) have different parity in the encoding.
				
				\begin{proof}
					The length of the subsequence encoded as \( (\vec{b}; s, l; t, m) \) is given by \( 2(b_s+1) + 2(b_{s+1}+1) + \cdots + 2(b_t+1) - l - m \).  This is odd if and only if \( l \) and \( m \) have different parity.
				\end{proof}
			\end{Lem}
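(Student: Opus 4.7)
The plan is to compute the length of the subsequence directly from its encoding $(\vec{b}; s, l; t, m)$ and read off its parity. The block decomposition from the preceding lemma makes this a routine exercise in bookkeeping, and the parity statement will fall out by inspection.

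First I would tally the contributions of each block. Block $s$ has $2(b_s+1)$ symbols in total, of which $l$ lie before the subsequence starts, so it contributes $2(b_s+1) - l$ symbols to the subsequence. Symmetrically, block $t$ contributes $2(b_t+1) - m$ symbols since $m$ symbols lie after the subsequence ends. For each intermediate block $i$ with $s < i < t$, the entire block of length $2(b_i+1)$ is swept up into the subsequence. Summing these contributions gives a total length of
\[
    2\sum_{i=s}^{t}(b_i+1) - l - m \, .
\]
(In the degenerate case $s = t$, which the paper has already noted does not arise in the main application, one checks directly that the same formula still holds provided $l + m < 2(b_s+1)$.)

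Since $2\sum_{i=s}^{t}(b_i+1)$ is even, the parity of the length of the subsequence coincides with the parity of $l+m$. Hence the length is odd if and only if exactly one of $l$ and $m$ is odd, i.e.\ they have different parities, which is the claim. There is no real obstacle here: the entire argument is pure counting, with the only minor point being a sanity check of the length formula in the edge case $s = t$.
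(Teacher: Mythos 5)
Your proof is correct and follows essentially the same route as the paper: both derive the length formula \( 2\sum_{i=s}^{t}(b_i+1) - l - m \) and observe that the even leading sum forces the parity of the length to match that of \( l+m \). You simply spell out the block-by-block bookkeeping and the \( s=t \) edge case in a bit more detail than the paper does.
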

			
			\begin{Lem}
				An odd length subsequence is trivial if and only if \( s \) and \( t \) have the same parity in the encoding.
				
				\begin{proof}
					Since the number of symbols in each block is even, we may ignore any intervening blocks.  Since \( s \leq t \) we can assume \( t = s \) if they have the same parity, or \( t = s+1 \) if they have opposite parity.
					
					If \( s \) and \( t \) have the same parity, we are marking out an odd length subsequence on alternating 0s and 1s.  Such a subsequence necessarily starts and ends with the same symbol.
					
					If \( s \) and \( t \) have different parity, part way through the subsequence the pattern 01 changes to 10.  So in the latter part of the subsequence 0 and 1 have been interchanged, meaning the start and end symbols are now different.
				\end{proof}
			\end{Lem}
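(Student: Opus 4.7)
My plan is to reduce the biconditional to a small case check by exploiting the uniform structure of the blocks. Every block has the form $(01)^{b_i+1}$ or $(10)^{b_i+1}$, so it has \emph{even} length $2(b_i+1)$, and its symbols strictly alternate. Consequently the first or last symbol of the subsequence is determined entirely by the parity of the block index ($s$ or $t$) together with the parity of the offset ($l$ or $m$) within that block.

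First I would argue that whole intervening blocks are irrelevant for the question of which symbol sits at either boundary. Any block lying strictly between block $s$ and block $t$ contributes an even number of alternating symbols, and crossing such a block therefore leaves the alternation phase unchanged; moreover, adjacent blocks, being $(01)^{\ast}$ next to $(10)^{\ast}$, flip the alternation pattern across every block boundary. Hence for the purposes of identifying the start and end symbols I may assume $t=s$ when $s\equiv t\pmod{2}$, and $t=s+1$ otherwise.

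In the same-parity case ($t=s$), the subsequence is a consecutive substring of a single alternating word such as $0101\cdots 01$. Every odd-length window of an alternating word begins and ends with the same symbol, so the subsequence is trivial.

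In the opposite-parity case ($t=s+1$), the subsequence crosses exactly one block boundary where the alternation pattern flips, e.g.\ $\cdots 01 \mid 10\cdots$. Using \autoref{lem:lmparity} to impose that $l$ and $m$ have opposite parity, a brief direct inspection shows that the start and end symbols differ, so the subsequence is non-trivial. The only mildly fiddly step is the bookkeeping of parities at this one junction; the rest is a routine alternation count.
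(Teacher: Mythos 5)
Your proposal is correct and follows essentially the same route as the paper: reduce to $t=s$ (same parity) or $t=s+1$ (opposite parity) by observing that each block has even length and each block boundary flips the alternation, then read off the start and end symbols from the alternation pattern. The only minor difference is in the opposite-parity case: the paper argues directly that the single flip interchanges 0 and 1 in the latter half of an odd-length window (so the endpoints, which would otherwise agree, now differ), while you route through \autoref{lem:lmparity} and defer to an unwritten parity check on $l$ and $m$; both are fine, though the paper's phrasing makes the conclusion visibly self-contained whereas yours leaves the last step as ``a brief direct inspection.''
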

			
			A subsequence and its encoding can be read off from each other, so they uniquely determine each other.  Thus the non-trivial odd length subsequences on the word \( \vec{b} \) correspond bijectively to the encodings where \( s \leq t \) and they have different parity (this prevents \( s = t \), so in fact we may take \( s < t \)), where \( l \) and \( m \) have different parity, and where \( l < 2(b_s+1) \) and \( m < 2(b_t+1) \).  
			
			\begin{Def}
				We will call such an encoding above an \emph{odd encoding} of the (non-trivial) subsequence.  If the subsequence it encodes has length \( L \), we will call it an \emph{odd encoding of length \( L \)}.
			\end{Def}
			
			We are now going to define a map on these which will be used to pairwise cancel the terms of \( D_{2k+1} \).
			
			\begin{Def}\label{def:phi}
				Define the following map on odd encodings
				\[
					\phi \colon (\vec{b}; s, l; t, m) \mapsto (\vec{c}; s, m; t, l)
				\, , \]
				where \( \vec{b} = [b_0, b_1, \ldots, b_{2n}] \) and \( \vec{c} \) is defined explicitly as follows:
				\[
					c_i = \begin{cases}
						b_i & \text{if \( i < s \) or \( i > t \)} \\
						b_{s + (t - i)} & \text{if \( s \leq i \leq t \)} \\
					\end{cases}
				\]
				
				So \( \vec{c} = [b_0, b_1, \ldots, b_{s-1}, b_t, b_{t-1}, \ldots, b_{s+1}, b_{s}, b_{t+1}, \ldots, b_{2n}] \) is obtained by reversing the sequence from position \( s \) to position \( t \) inclusive in the vector \( \vec{b} \).
			\end{Def}
			
			Notice that \( \vec{c} \) is simply a permutation of \( \vec{b} \).  We can interpret this map on the binary words as reflecting the sequence of blocks \( s \) through \( t \) inclusive which contain the given subsequence.  This will produce a new subsequence on another word.
			
			\begin{Lem}\label{lem:len}
				The image of an odd encoding under \( \phi \) is again an odd encoding, moreover the length of the encoded sequence does not change.
				
				\begin{proof}
					The map does not change \( s \) or \( t \), so they are fine.  After swapping \( l, m \) to \( m, l \), they still have different parity.  Lastly we have \( m < 2(b_t+1) = 2(c_s+1)  \) and \( l < 2(b_s + 1) = 2(c_t + 1) \).
					
					The new length is given by
					\begin{align*}
						& 2(c_s + 1) + 2(c_{s+1}+1) + \cdots + 2(c_t + 1) - m - l \\
						{}={} & 2(b_t + 1) + 2(b_{t-1} + 1) + \cdots + 2(b_s + 1) - l - m
					\, , \end{align*}
					which is exactly the old length.
				\end{proof}
			\end{Lem}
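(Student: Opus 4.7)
The plan is to unpack the definition of ``odd encoding'' and verify each of its conditions for the image, then exploit the fact that \( \phi \) only permutes the entries of \( \vec{b} \) in a controlled way.

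First, to see that \( \phi(\vec{b}; s, l; t, m) = (\vec{c}; s, m; t, l) \) is again an odd encoding, I would check the four defining conditions separately. The indices \( s \) and \( t \) are fixed by \( \phi \), so \( s < t \) still holds with opposite parities. The numbers \( l \) and \( m \) are merely interchanged, so they still have different parity. The only nontrivial check is the block bound: in the new encoding one needs \( m < 2(c_s + 1) \) and \( l < 2(c_t + 1) \). From \autoref{def:phi} we have \( c_s = b_t \) and \( c_t = b_s \), so these become exactly the original inequalities \( m < 2(b_t + 1) \) and \( l < 2(b_s + 1) \), which hold by the hypothesis that \( (\vec{b}; s, l; t, m) \) was already odd.

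For the length-preservation statement, I would apply the formula for subsequence length established in the proof of \autoref{lem:lmparity}, namely \( 2(b_s+1) + 2(b_{s+1}+1) + \cdots + 2(b_t+1) - l - m \). The point is that the segment \( b_s, b_{s+1}, \ldots, b_t \) is simply reversed to produce \( c_s, c_{s+1}, \ldots, c_t \), which is a permutation of the original block-sizes. Hence the sum \( \sum_{i=s}^{t} 2(c_i + 1) \) agrees with \( \sum_{i=s}^{t} 2(b_i + 1) \); the subtracted quantity \( m + l \) also agrees with \( l + m \), and the two lengths coincide.

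I do not anticipate a genuine obstacle here: the lemma is a bookkeeping check that the combinatorial encoding is compatible with the block-reversal map, and both parts follow as soon as the definitions are unpacked. The only point requiring a little care is tracking the inequality bounds correctly under the swap, since the positions \( s \) and \( t \) in the encoding are preserved while the associated block-sizes \( b_s \) and \( b_t \) are interchanged.
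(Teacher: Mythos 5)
Your argument matches the paper's proof: both check the defining conditions of an odd encoding directly (parities of \( s,t \) and \( l,m \) preserved, the block bounds transferring via \( c_s = b_t \), \( c_t = b_s \)), and both derive length-invariance from the fact that \( c_s, \ldots, c_t \) is a rearrangement of \( b_s, \ldots, b_t \) so the sum is unchanged. No substantive difference.
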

			
			\begin{Lem}
				The map \( \phi \) is an involution, \( \phi^2 = \id \).
				
				\begin{proof}
					Given a odd encoding \( (\vec{b}; s, l; t, m) \), we have
					\[
						\phi^2(\vec{b}; s, l; t, m) = \phi(\vec{c}; s, m; t, l) = (\vec{d}; s, l; t, m)
					\]
					for some vector \( \vec{d} = [d_i] \).
					
					By definition, we have
					\[
						d_i = \begin{cases}
							c_i = b_i & \text{for \( i < s \) or \( i > t \)} \\
							c_{s + (t-i)} = b_{s + (t - \{s + (t-i)\})} = b_i& \text{for \( s \leq i \leq t \),}
						\end{cases}
					\]
					as we are just reversing the sequence from position \( s \) to position \( t \) a second time.  So \( \vec{d} = \vec{b} \), and \( \phi^2 = \id \).
				\end{proof}
			\end{Lem}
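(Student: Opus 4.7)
The plan is to verify $\phi^2 = \id$ by decomposing the action of $\phi$ into independent pieces on the five coordinates of an odd encoding $(\vec{b}; s, l; t, m)$. According to \autoref{def:phi}, the map fixes $s$ and $t$, swaps $l$ with $m$, and permutes the entries of $\vec{b}$ by reversing the segment from position $s$ to position $t$ inclusive. The $l, m$ swap is trivially an involution, and the block indices $s, t$ are never touched, so everything reduces to showing that the block-reversal operation on the vector is an involution.

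For that reduction, I would simply unwind the piecewise formula $c_i = b_{s + (t - i)}$ for $s \leq i \leq t$ (and $c_i = b_i$ otherwise). Applying the same reversal a second time --- which is legitimate because $s$ and $t$ are preserved by the first step --- produces a vector $\vec{d}$ with
\[
  d_i = c_{s + (t - i)} = b_{s + (t - (s + (t - i)))} = b_i
\]
in the reversed block, and $d_i = c_i = b_i$ outside it. Hence $\vec{d} = \vec{b}$ and the entire encoding returns to its starting value.

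The one small subtlety worth flagging at the outset is that the intermediate object $(\vec{c}; s, m; t, l)$ must itself be a valid odd encoding, so that $\phi$ can legally be applied to it a second time. This is exactly the content of \autoref{lem:len}, which I may assume. Given that, no obstacles arise: the argument is essentially the statement that reversing a finite list twice returns the original list, combined with a symmetric swap of two labels, so I do not expect any hard step.
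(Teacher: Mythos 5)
Your proof is correct and follows essentially the same route as the paper: note that $s, t$ are fixed, $l \leftrightarrow m$ is trivially involutive, and then unwind the formula $c_{s+(t-i)} = b_{s+(t-(s+(t-i)))} = b_i$ to see that the segment reversal undoes itself. Your extra remark that the intermediate $(\vec{c}; s, m; t, l)$ must be a valid odd encoding (via \autoref{lem:len}) is a sensible point of hygiene that the paper leaves implicit, but it does not change the substance of the argument.
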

			
			\begin{Lem}\label{lem:sub}
				The subsequence given by an odd encoding \( \alpha \), and the subsequence given by \( \phi(\alpha) \), are the reverse of each other.
				
				\begin{proof}
					If the odd encoding is \( (\vec{b}; s, l; t, m) \), its image under \( \phi \) is \( (\vec{c}; s, m; t, l) \), where \( \vec{c} = [b_0, b_1, \ldots, b_{s-1}, b_t, b_{t-1}, \ldots, b_{s+1}, b_{s}, b_{t+1}, \ldots, b_{2n}] \) is obtained by reversing \( \vec{b} \) from position \( s \) to position \( t \).
					
					By symmetry we can assume the pattern in block \( s \) is \( 01 \), otherwise interchange 0 and 1 below.  Since \( s \) and \( t \) have different parity, the pattern in block \( t \) is \( 10 \).  We get the given by \( \alpha \) by taking the binary string 
					\[
						(01)^{b_s+1} (10)^{b_{s+1}+1} \cdots (01)^{b_{t-1}+1} (10)^{b_t+1} 
					\, , \]
					of blocks \( s \) through \( t \) inclusive of \( \vec{b} \), then removing the first \( l \) symbols and the last \( m \) symbols.  This gives the subsequence of \( \alpha \) as
					\[
						(\underbrace{\cdots 01}_{\mathclap{2(b_s+1) - l}}) (10)^{b_{s+1}+1} \cdots (01)^{b_{t-1}+1} (\underbrace{10 \cdots}_{\mathclap{2(b_t+1) - m}})
					\, . \]
					
					Correspondingly we get the subsequence given by \( \phi(\alpha) \) by taking the binary string \( (01)^{c_s+1} (10)^{c_{s+1}+1} \cdots (01)^{c_{t-1}+1} (10)^{c_{t}+1} \) of blocks \( s \) through \( t \) inclusive of \( \vec{c} \), and removing the first \( m \) symbols and last \( l \) symbols.  Recall that \( c_{i} = b_{s + (t-i)} \) for \( s \leq i \leq t \), which is given by reversing \( \vec{b} \) from position \( s \) through \( t \) inclusive.  So \( c_s = b_t \), \( c_{s+1} = b_{t-1} \), and so on.  This gives the subsequence of \( \phi(\alpha) \) as
					\begin{align*}
						& (\underbrace{\cdots 01}_{\mathclap{2(c_s+1)-m}}) (10)^{c_{s+1}+1} \cdots (01)^{c_{t-1}+1} (\underbrace{10 \cdots}_{\mathclap{2(c_{t}+1)-l}}) \\[0.75em]
						 {}={} & (\underbrace{\cdots 01}_{\mathclap{2(b_t+1)-m}}) (10)^{b_{t-1}+1} \cdots (01)^{b_{s+1}+1} (\underbrace{10 \cdots}_{\mathclap{2(b_s+1)-l}})
					\, . \end{align*}
					This is exactly the reverse of the subsequence of \( \alpha \), given above.
				\end{proof}
			\end{Lem}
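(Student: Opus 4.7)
The plan is to show, at the level of binary strings, that the concatenation of blocks \(s\) through \(t\) of the word associated to \(\vec{c}\) is the literal string-reverse of the concatenation of blocks \(s\) through \(t\) of the word associated to \(\vec{b}\). Once this is in hand, the claim about subsequences follows immediately: removing \(l\) symbols from the front and \(m\) symbols from the back of a string \(W\) and then reversing produces the same thing as reversing \(W\) first and then removing \(m\) symbols from the front and \(l\) symbols from the back — which is exactly what the two encodings \((\vec{b};s,l;t,m)\) and \(\phi(\vec{b};s,l;t,m) = (\vec{c};s,m;t,l)\) prescribe.

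The key observation is a piece of parity bookkeeping. The pattern appearing in block \(i\) of any word, either \((01)\) or \((10)\), is determined purely by the parity of \(i\), not by the vector whose word we are forming. The odd encoding hypothesis forces \(s\) and \(t\) to have different parity, and for each \(i \in \{s,\ldots,t\}\) the indices \(i\) and \(s+t-i\) therefore also have different parity. Now block \(i\) of \(\vec{c}\) has size \(c_i = b_{s+t-i}\) and pattern determined by the parity of \(i\); block \(s+t-i\) of \(\vec{b}\) has the opposite pattern and the same size. Since \((01)^{k+1}\) reversed is literally \((10)^{k+1}\), block \(i\) of \(\vec{c}\) is exactly the reverse of block \(s+t-i\) of \(\vec{b}\).

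Putting the two observations together, the concatenation of blocks \(s, s+1, \ldots, t\) of \(\vec{c}\) is the concatenation of the reverses of blocks \(t, t-1, \ldots, s\) of \(\vec{b}\), which is precisely the reverse of the concatenation of blocks \(s, s+1, \ldots, t\) of \(\vec{b}\). Combined with the trimming observation, this gives the claim.

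I expect the main obstacle to be keeping the parity bookkeeping straight — in particular ensuring that the different-parity condition on \((s,t)\), guaranteed exactly by non-triviality, is used consistently when asserting that each individual block flips between \((01)\) and \((10)\) patterns under the block-reversal. A concrete alternative, which sidesteps the abstract bookkeeping, is to write the two subsequences out explicitly as binary strings in terms of \(b_s, b_{s+1}, \ldots, b_t\) and \(l,m\), at which point the reversal can simply be read off from the displayed strings.
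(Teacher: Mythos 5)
Your argument is correct and matches the paper's proof in substance: both establish that reversing $\vec{b}$ from positions $s$ to $t$ literally reverses the binary string of blocks $s$ through $t$ (because block sizes are permuted in reverse order while the $(01)$/$(10)$ pattern flips by the parity of the block index), and then swap the roles of $l$ and $m$ in the trimming. Your formulation is slightly cleaner in that it isolates the block-level reversal lemma and the trim/reverse commutation as two separate observations, whereas the paper simply displays both trimmed strings and reads off the reversal — indeed you flag this display-and-read-off method as the concrete alternative, which is exactly what the paper does.
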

			
			Recall, from Definition 4.4 in \cite[][p. 11]{DecompMot} introduced earlier, that a subsequence on a word gives rise to a quotient sequence by deleting the symbols of the word strictly between the boundary symbols of the subsequence, this is the \emph{quotient sequene given by an odd encoding}.  I now want to show how the quotient sequences given by \( \alpha \), and the qoutient sequence given by \( \phi(\alpha) \) are related. \medskip

			An explicit example first will make the abstract idea more understandable.  Consider the odd encoding \( \alpha = ([1,2,3,1,2]; 2, 2; 3, 5) \), so \( \phi(\alpha) =  ([1,3,2,1,2]; 2, 5; 3, 2) \).  The subsequences they give are
			\begin{align*}
				\alpha &\rightarrow 0101 \mid 10 \underbracket{\boldsymbol{1010 \mid 010}} 10101 \mid 1010 \mid 010101 \\
				\phi(\alpha) &\rightarrow 0101 \mid 10101 \underbracket{\boldsymbol{010 \mid 0101}} 01 \mid 1010 \mid 010101
			\, , \end{align*}
			and we can see both quotient sequences equal \( 0101 \mid 101010101 \mid 1010 \mid 010101 \).  But why is this the case?
			
			 Notice that both quotient sequences necessarily agree before block \( s = 2 \), and after block \( t = 3 \) because the words match here.  What is the contribution from blocks 2 and 3 in each case?  For \( \alpha \), the contribution from block 2 is an alternating sequence of 0's and 1's of length 3, and the contribution from block 3 is an alternating sequence of 0's and 1's of length 6.  The boundary symbols of the subsequence are different, so when we join these two contributions together we get an alternating sequence of 0's and 1's of length \( 3 + 6 = 9 \), starting with a \( 1 \).  
			 
			 But exactly the same analysis holds for \( \phi(\alpha) \).  The contribution from blocks 2 and 3 in \( \phi(\alpha) \) is an alternating sequence of 0's and 1's of length 9, starting with a \( 1 \), giving the quotient sequence above.
			
			\begin{Lem}\label{lem:quo}
				The quotient sequence given by an odd encoding \( \alpha \), and the quotient sequence given by \( \phi(\alpha) \), are equal.
			
				\begin{proof}
					Following on from the previous lemma, since \( b_i  = c_i \), for \( i < s \) and \( i > t \), the quotient sequence agree in these blocks.  Here they are both:
					\[
						(01)^{b_0+1} (10)^{b_1+1} \cdots (xy)^{b_{s-1}+1} \text{ and } (yx)^{b_{t+1}+1} \cdots (10)^{b_{2n-1}+1} (01)^{b_{2n}+1}
					\, , \]
					where \( xy \) is some pattern \( 01 \) or \( 10 \) as appropriate.  There is no contribution from blocks \( s < i < t \) as these are deleted, so we only need to consider the contribution from blocks \( s \) and \( t \) which join the two sections above.
					
					For \( \alpha \), the contribution from block \( s \) is an alternating sequence of 0's and 1's of length \( l + 1\), and the contribution from block \( t \) is an alternating sequence of 0's and 1's of length \( m + 1\).  The two boundary terms of the subsequence are different, so when we join these contributions together we get an alternating sequence of 0's and 1's of length \( l + m + 2 \).
					
					The same analysis for \( \phi(\alpha) \) shows the contribution from blocks \( s \) and \( t \) here is an alternating sequence of 0's and 1's of length \( m + l + 2 \).  These two sequences agree as they have the same length and they begin with the symbol \( y \), the first symbol in block \( s \).
					
					So both quotient sequences equal:
					\[
						(01)^{b_0+1} (10)^{b_1+1} \cdots (xy)^{b_{s-1}+1} (\underbrace{yxyx \cdots y}_{l + m + 2}) (yx)^{b_{t+1}+1} \cdots (10)^{b_{2n-1}+1} (01)^{b_{2n}+1}
					\, , \]
					and are equal as claimed.
				\end{proof}
			\end{Lem}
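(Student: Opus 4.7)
\emph{Proof proposal.} Write $\alpha = (\vec{b}; s, l; t, m)$ and $\phi(\alpha) = (\vec{c}; s, m; t, l)$, where $\vec{c}$ is obtained from $\vec{b}$ by reversing the entries in positions $s$ through $t$. Since the quotient sequence just deletes the interior of the marked subsequence, my plan is to compare the two quotient sequences block-by-block and show the only place they could possibly differ (the segment coming from blocks $s$ through $t$) is in fact the same alternating string in each case.

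First I would split each word at the boundaries of block $s$ and block $t$, giving three pieces: the prefix (blocks $0, \dots, s-1$), the middle (blocks $s, \dots, t$), and the suffix (blocks $t+1, \dots, 2n$). Because $\vec{b}$ and $\vec{c}$ agree outside positions $s, \dots, t$, the prefix and suffix of the two words coincide verbatim, and these pieces contribute identically to the two quotient sequences.

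Now I would handle the middle. By the definition of the quotient sequence, inside blocks $s$ through $t$ only the start boundary, the end boundary, and the symbols strictly outside the subsequence survive. So the contribution from the middle to the quotient of $\alpha$ is exactly the first $l+1$ symbols of block $s$ of $\vec{b}$ concatenated with the last $m+1$ symbols of block $t$ of $\vec{b}$; the contribution for $\phi(\alpha)$ is the first $m+1$ symbols of block $s$ of $\vec{c}$ concatenated with the last $l+1$ symbols of block $t$ of $\vec{c}$. Each piece taken individually is an alternating string (blocks are alternating), and because the subsequence is non-trivial, the two boundary symbols joined in the middle are different, so in both cases the whole middle piece is a single alternating string of length $l + m + 2$.

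The one thing that needs real checking, and the step I expect to be the main (small) obstacle, is that these two alternating strings start with the same symbol. The starting symbol of block $s$ in $\vec{b}$ depends only on the parity of $s$ (by the definition of the encoding as a sequence of $(01)$- and $(10)$-blocks), and similarly for $\vec{c}$; since $s$ is unchanged by $\phi$, the two middles begin with the same symbol. Two alternating binary strings of the same length with the same first symbol are equal, so the two middles agree, and combining with the already-equal prefix and suffix yields the claim. One can then package everything into the closed form
\[
(01)^{b_0+1}\cdots(xy)^{b_{s-1}+1}\,\underbrace{yxyx\cdots y}_{l+m+2}\,(yx)^{b_{t+1}+1}\cdots(01)^{b_{2n}+1},
\]
with $y$ the first symbol of block $s$, as a compact witness that both quotient sequences are literally the same word.
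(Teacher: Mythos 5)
Your proof is correct and takes essentially the same route as the paper: split the word into prefix (blocks $0,\dots,s-1$), middle (blocks $s,\dots,t$), and suffix (blocks $t+1,\dots,2n$); observe the prefix and suffix coincide since $\vec{b}$ and $\vec{c}$ agree there; and show the middle contribution is the same alternating string of length $l+m+2$ in both cases because it starts with the same symbol (determined by the parity of $s$, which $\phi$ fixes) and alternates thanks to non-triviality of the subsequence. You even arrive at the same closed-form expression as the paper, so this is a faithful reconstruction.
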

			
			Since \( \phi^2 = \id \), we get a group \( G = \{ \id, \phi \} \) which can act on the odd encodings:
			
			\begin{Lem}\label{lem:act}
				Let \( \mathcal{C} \) be a set of words of the form \( \vec{x} = [x_0, x_1, \ldots, x_{2n}] \), such that any permutation \( \vec{x'} = [x_{\sigma(0)}, x_{\sigma(1)}, \ldots, x_{\sigma(2n)}] \) of a word in \( \mathcal{C} \) also lies in \( \mathcal{C} \).  Then for any fixed \( L \), the group \( G \) acts on the odd encodings subsequences of length \( L \) on these words.
				
				\begin{proof}
					If an odd encoding \( \alpha \) of length \( L \) is taken from the word \( \vec{b} \), then \( \phi(\alpha) \) is an odd encoding of length \( L \) taken from the word \( \vec{c} \), where \( \vec{c} \) is a permutation of \( \vec{b} \), by \autoref{def:phi} and \autoref{lem:len}.  So we map into the set of odd encodings of length \( L \) on the words of \( \mathcal{C} \).  Function composition gives a group action on this set.
				\end{proof}
			\end{Lem}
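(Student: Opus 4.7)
The plan is to verify the two requirements of a group action: that each element of $G$ sends the set of odd encodings of length $L$ on words in $\mathcal{C}$ to itself, and that composition of the action matches the group law. Since $\phi^2 = \id$ has already been established and $G$ has order two, the only substantive content is showing that $\phi$ is a well-defined self-map of this set.

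First I would unpack what $\phi$ does to an odd encoding $\alpha = (\vec{b}; s, l; t, m)$ with $\vec{b} \in \mathcal{C}$: by \autoref{def:phi}, $\phi(\alpha) = (\vec{c}; s, m; t, l)$, where $\vec{c}$ is obtained from $\vec{b}$ by reversing positions $s$ through $t$. This $\vec{c}$ is manifestly a permutation of $\vec{b}$, so the hypothesis on $\mathcal{C}$ (closure under permutations of entries) gives $\vec{c} \in \mathcal{C}$. Hence $\phi(\alpha)$ is based on an admissible word.

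Next I would check that $\phi(\alpha)$ is still an odd encoding of length $L$. The block indices $s, t$ are unchanged, so they still satisfy $s < t$ with opposite parity; the pair $(m,l)$ still has different parity since $(l,m)$ did; and the bounds $m < 2(c_s + 1) = 2(b_t+1)$ and $l < 2(c_t+1) = 2(b_s+1)$ follow immediately from the original bounds $l < 2(b_s+1)$ and $m < 2(b_t+1)$. The length invariance then comes directly from \autoref{lem:len}, so $\phi(\alpha)$ lies in the same set.

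Having verified this, the action itself is formal: $\id$ is trivially a self-map, $\phi$ is a self-map by the above, and the group law is guaranteed by $\phi^2 = \id$. I expect no real obstacle here, since the hard computational lemmas (invariance of length, involutivity, compatibility of subsequence and quotient sequence) have already been carried out; this lemma simply packages the symmetry into the language of group actions, so that in the next step one can sum over $G$-orbits and obtain pairwise cancellation in $D_{2k+1} S^\mot$.
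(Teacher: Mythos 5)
Your proposal is correct and follows essentially the same route as the paper: use Definition \ref{def:phi} to see that $\vec{c}$ is a permutation of $\vec{b}$ (hence in $\mathcal{C}$), invoke Lemma \ref{lem:len} for well-definedness and length invariance, and observe that the group law follows formally since $G$ has order two. You have simply spelled out the verification of the odd-encoding conditions in more detail than the paper, which defers them wholesale to Lemma \ref{lem:len}.
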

			
			Look at the orbits of such a set under \( G \).  \emph{A priori} the orbits have size 1 or 2, the divisors of the order of \( G \).
			
			\begin{Lem}\label{lem:orbit2}
				All the orbits of odd encodings under \( G \) have size 2.
				
				\begin{proof}
					If an orbit has size 1, its unique element is fixed under \( \phi \).  This means \( l = m \), but this cannot be as they have opposite parity by \autoref{lem:lmparity}.
				\end{proof}
			\end{Lem}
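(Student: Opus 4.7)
The plan is to prove this by contradiction. Since \( |G| = 2 \), every orbit under \( G \) has size \( 1 \) or \( 2 \), and an orbit has size \( 1 \) precisely when its unique element is a fixed point of \( \phi \). So I would suppose that some odd encoding \( \alpha = (\vec{b}; s, l; t, m) \) satisfies \( \phi(\alpha) = \alpha \) and then aim for a contradiction.

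By \autoref{def:phi}, the image is \( \phi(\alpha) = (\vec{c}; s, m; t, l) \), where \( \vec{c} \) is obtained from \( \vec{b} \) by reversing the entries in positions \( s \) through \( t \) inclusive. Matching the third and fifth coordinates of \( \phi(\alpha) \) with those of \( \alpha \) immediately forces \( l = m \).

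However, \( \alpha \) is by hypothesis an odd encoding, so by \autoref{lem:lmparity} the parameters \( l \) and \( m \) must have opposite parity. This is incompatible with \( l = m \), giving the desired contradiction, and hence no fixed point exists and every orbit has size exactly \( 2 \).

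I do not anticipate any real obstacle here: once \autoref{def:phi} and \autoref{lem:lmparity} have been established earlier, the argument reduces to the one-line parity observation above, and the matching of coordinates under \( \phi \) is straightforward from the explicit formula for the map.
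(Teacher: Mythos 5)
Your proposal is correct and takes essentially the same approach as the paper: both argue that a size-one orbit would mean a fixed point of \( \phi \), which by the explicit formula in \autoref{def:phi} forces \( l = m \), contradicting the opposite-parity requirement from \autoref{lem:lmparity}. You simply spell out the coordinate matching a bit more explicitly than the paper does.
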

			
			\begin{Lem}\label{lem:cancel}
				The two elements of a fixed orbit give terms which cancel in \( D_{2k+1} \).
				
				\begin{proof}
					The two elements are of the form \( \alpha \) and \( \phi(\alpha) \).  From \autoref{lem:sub}, they give subsequences \( X \) and \( Y \) respectively, and these are reverses of each other.  From \autoref{lem:quo}, they give the same quotient sequence \( Q \).  Hence in \( D_{2k+1} \) we get the terms \( I^\mot(X) \otimes I^\lmot(Q) \) and \( I^\lmot(Y) \otimes I^\mot(Q) \).  By reversal of paths \( I^\lmot(X) = -I^\lmot(Y) \), since the subsequence has odd length, so they cancel.
				\end{proof}
			\end{Lem}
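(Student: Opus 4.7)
The plan is to attribute to each odd encoding $\alpha$ the specific summand of $D_{2k+1}I^{\mot}(\vec{b})$ it produces, then show that the summand for $\phi(\alpha)$ is its negative. According to the explicit formula for $D_r$ recalled in the excerpt, a subsequence of length $2k+3$ with subsequence word $X$ and quotient word $Q$ contributes
\[
	I^{\lmot}(X) \otimes I^{\mot}(Q)
\]
to $D_{2k+1}$. I would begin by writing out, for $\alpha = (\vec{b}; s, l; t, m)$ and $\phi(\alpha) = (\vec{c}; s, m; t, l)$, the pair
\[
	I^{\lmot}(X) \otimes I^{\mot}(Q_{\alpha}) \quad\text{and}\quad I^{\lmot}(Y) \otimes I^{\mot}(Q_{\phi(\alpha)}),
\]
so that the content of the lemma becomes an identity between these two tensors.

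Next I would collapse the right-hand factors using \autoref{lem:quo}, which says $Q_{\alpha} = Q_{\phi(\alpha)}$, and relate the left-hand factors using \autoref{lem:sub}, which says that $Y$ is the letter-by-letter reverse of $X$. At this point both terms share the common right factor $I^{\mot}(Q)$, and the question reduces to comparing $I^{\lmot}(X)$ with $I^{\lmot}(Y)$.

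The one remaining step, and the only non-bookkeeping ingredient, is producing the sign. Because we are cutting with $D_{2k+1}$, the subsequence has $2k+1$ interior entries, an odd number. The reversal-of-paths identity for $I^{\mot}$ recalled at the end of Section~2, applied to a motivic iterated integral whose boundaries lie in $\{0,1\}$, gives a factor $(-1)^{2k+1}=-1$ when the interior string is reversed; since this relation holds in $\mathcal{H}$ it descends to $\mathcal{L}$, hence to $I^{\lmot}$. Therefore $I^{\lmot}(Y) = -I^{\lmot}(X)$, and the two tensor summands sum to zero, as claimed. The hardest part is simply making sure that the length of the subsequence really is odd — this is automatic from $D_{2k+1}$ and was already encoded combinatorially in \autoref{lem:lmparity} — so no genuine difficulty should arise.
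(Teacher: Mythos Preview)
Your argument is correct and follows essentially the same route as the paper: invoke \autoref{lem:sub} and \autoref{lem:quo} to identify the two tensor summands as $I^{\lmot}(X)\otimes I^{\mot}(Q)$ and $I^{\lmot}(Y)\otimes I^{\mot}(Q)$ with $Y$ the reverse of $X$, then apply reversal of paths to get the sign $(-1)^{2k+1}=-1$. Your version is in fact slightly more careful in spelling out that the sign comes from the $2k+1$ interior entries and that the identity descends from $\mathcal{H}$ to $\mathcal{L}$.
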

			
			Now we can put all the pieces together and show each \( D_{2k+1} S^\mot \) is identically zero.
			
			\begin{Lem}\label{lem:Dwt}
				For each \( k \) such that \( 3 \leq 2k+1 < \wt \), we have \( D_{2k+1} S^\mot = 0 \).
			
				\begin{proof}
					The sum \( S^\mot = \pm \sum_{\sigma \in S_{2n+1}} I^\mot([a_{\sigma(0)}, a_{\sigma(1)}, \ldots, a_{\sigma(2n)}]) \) runs over all permutations in \( S_{2n+1} \), so all possible permutations of the word \( \vec{a} = [a_0, a_1, \ldots, a_{2n}] \) appear.  The sign \( \pm \) is determined by the depth of the corresponding MZVs in \( S \).
					
					In this sum each word \( \vec{a}' = [a'_0, a'_1, \ldots, a'_{2n}] \) is repeated with the same multiplicity \( \lambda \).  One can see this by counting explicitly.  The multiplicity is just the number of ways of permuting each set of repeated values of the \( a_i \)'s.  Or view \( S_{2n+1} \) as acting on these words, there is one orbit, so each stabilizer has the same size.
					
					Let \( \mathcal{C} \) be the set \( \{ [a_{\sigma(0)}, a_{\sigma(1)}, \ldots, a_{\sigma(2n)}] \mid \sigma \in S_{2n+1} \} \) of all permutations of the word \( \vec{a} \).  Then
					\[
						S^\mot = \pm \lambda \sum_{w \in \mathcal{C}} I^\mot(w)
					\, . \]
					
					Fixing  \( k \) such that \( 3 \leq 2k+1 < \wt \), we find
					\[
						D_{2k+1} S^\mot = \pm \lambda D_{2k+1} \sum_{w \in \mathcal{C}} I^\mot(w)
					\, , \]
					since \( D_{2k+1} \) is linear.  The non-zero terms of this sum are exactly the odd encodings of length \( 2k+3 \) on the words \( w \in \mathcal{C} \).  Since \( \mathcal{C} \) contains any permutations of its words, \autoref{lem:act} shows the group \( G \) acts on these encodings, and \autoref{lem:orbit2} shows they break up into orbits of size 2.  By \autoref{lem:cancel} the two elements in each orbit cancel in \( D_{2k+1} \).  Hence all terms cancel, so:
					\[
						D_{2k+1} S^\mot = \pm \lambda D_{2k+1}  \sum_{w \in \mathcal{C}} I^\mot(w) = 0
					\, , \]
					as claimed.
				\end{proof}
			\end{Lem}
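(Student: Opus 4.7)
The plan is to assemble the earlier structural lemmas into a concise cancellation argument. My first step is to rewrite
\[
S^\mot = \pm \lambda \sum_{w \in \mathcal{C}} I^\mot(w),
\]
where \( \mathcal{C} \) is the set of distinct words obtained by permuting the entries of \( \vec{a} \), and \( \lambda \) is the common multiplicity with which each such word appears in the sum over \( S_{2n+1} \). This multiplicity is the same for every \( w \in \mathcal{C} \) by the orbit-stabilizer theorem applied to the transitive \( S_{2n+1} \)-action on \( \mathcal{C} \), and the overall sign in \( Z^\mot = \pm I^\mot(\cdots) \) depends only on the (constant) depth, so it factors out.

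Next I apply \( D_{2k+1} \), which is \( \Q \)-linear. Expanding each \( D_{2k+1} I^\mot(w) \) via Brown's explicit formula yields a sum of terms \( I^\lmot(X) \otimes I^\mot(Q) \) indexed by length-\( (2k+3) \) subsequences on \( w \). Trivial subsequences vanish directly, so the task reduces to controlling the odd encodings of length \( 2k+3 \) drawn from words in \( \mathcal{C} \). Because \( \mathcal{C} \) is closed under arbitrary permutation of its entries, \autoref{lem:act} shows that \( \phi \) acts on this set of encodings, and \autoref{lem:orbit2} shows that every \( G \)-orbit has size exactly two.

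Within each orbit \( \{\alpha, \phi(\alpha)\} \), \autoref{lem:cancel} supplies the cancellation: by \autoref{lem:sub} the two subsequences are reverses of each other, so reversal of paths (applied to an odd-length subsequence) contributes a sign \( -1 \), while by \autoref{lem:quo} the quotient sequences coincide. Summing over all pairs gives zero, establishing \( D_{2k+1} S^\mot = 0 \). The substantive work has already been carried out in constructing \( \phi \) and verifying its properties, so no real obstacle remains at this stage; the only genuine care needed is bookkeeping to confirm that \( \lambda \) is common to all \( w \in \mathcal{C} \) and that the global sign is independent of \( \sigma \in S_{2n+1} \), both of which are immediate from the preceding observations.
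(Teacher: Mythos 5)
Your argument follows the paper's proof essentially verbatim: factor out the constant sign and common multiplicity \( \lambda \), rewrite the sum over \( \mathcal{C} \), apply linearity of \( D_{2k+1} \), and then invoke \autoref{lem:act}, \autoref{lem:orbit2}, and \autoref{lem:cancel} to pair off and cancel the odd encodings of length \( 2k+3 \). The appeal to orbit-stabilizer for the uniformity of \( \lambda \) is exactly the paper's justification, so there is nothing to distinguish the two proofs.
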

			
			The rest of the proof strategy we outlined after \autoref{prop:symins} goes through without a problem:
			
			\begin{proof}[Proof of Proposition]
				We have lifted
				\[
					S \coloneqq \sum_{\mathclap{\sigma \in S_{2n+1}}} \, Z(a_{\sigma(0)}, a_{\sigma(1)}, \ldots, a_{\sigma(2n)})
				\]
				to
				\[
					S^\mot \coloneqq \sum_{\mathclap{\sigma \in S_{2n+1}}} \, Z^\mot(a_{\sigma(0)}, a_{\sigma(1)}, \ldots, a_{\sigma(2n)})
				\]
				on the motivic MZV level.  By \autoref{lem:Dwt}, \( D_{<\wt} S^\mot = 0 \), so \autoref{thm:kerDN} on the kernel of \( D_{<N} \) tells us that \( S^\mot = q\zeta^\mot(\wt) \), for some \( q \in \Q \).
				
				Apply the period map in \autoref{eqn:permap} to this, and we get
				\[
					S = \per S^\mot = \per q \zeta^\mot(\wt) = q \zeta(\wt)
				\, . \]
				The weight of each MZV in the sum is even, explicitly it is \( \wt = 4n + 2 \sum_{i=0}^{2n} a_i \).  Euler's evaluation of \( \zeta(2k) \) says
				\[ 
					\zeta(2k) = (-1)^{k+1} \frac{B_{2n} (2\pi)^{2n}}{2 (2n)!}
				\, , \]
				where \( B_{2n} \) is a Bernoulli number and in particular rational.  This shows that \( \zeta(\wt) \in \pi^\wt \Q \).  Hence \( S = q \zeta(\wt) \in \pi^\wt \Q \), as claimed.
			\end{proof}
			
			As a corollary to this we have:
					
			\begin{Cor}
				For given integers \( m, n \geq 0 \), the MZV
				\[ 
					\zeta(\{ \, \{2\}^m, 1, \{2\}^m, 3\}^n, \{2\}^m)
				\]
				is a rational multiple of \( \pi^{4n + 2m(2n+1)} \).
	
				\begin{proof}
					Put \( B \coloneqq \zeta(\{ \, \{2\}^m, 1, \{2\}^m, 3\}^n, \{2\}^m) \).  Now set \( a_0 = a_1 = \cdots = a_{2n} = m \) in the above result.  For any permutation \( \sigma \in S_{2n+1} \), we have:
					\[
				 Z(a_{\sigma(0)}, a_{\sigma(1)}, \ldots, a_{\sigma(2n)}) = Z(m, m, \ldots, m) = B
					\]
					Summing over all permutations we get:
					\[
						(2n+1)! \, B = \sum_{\mathclap{\sigma \in S_{2n+1}}} \, Z(a_{\sigma(0)}, a_{\sigma(1)}, \ldots, a_{\sigma(2n)}) \in \pi^\wt \Q
					\]
					by symmetric insertion.  Dividing by \( (2n+1)! \) shows \( B \in \pi^\wt \Q \).
					
					In this case the weight of \( B \) is \( \wt = 4n + 2 \sum_{i=0}^{2n} m = 4n + 2m(2n+1) \), so \( B \) is a rational multiple of \( \pi^{4n + 2m(2n+1)} \) as claimed.
				\end{proof}
			\end{Cor}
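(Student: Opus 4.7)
The plan is to derive this corollary as a direct specialization of the Symmetric Insertion Proposition just proved. I would set all of the free parameters equal, i.e.\ take \( a_0 = a_1 = \cdots = a_{2n} = m \) in \autoref{prop:symins}. The key observation is that for this choice the value \( Z(a_{\sigma(0)}, a_{\sigma(1)}, \ldots, a_{\sigma(2n)}) \) is independent of \( \sigma \in S_{2n+1} \), since every permutation of the tuple \( (m,m,\ldots,m) \) is identical to itself.

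First I would write \( B \coloneqq \zeta(\{ \, \{2\}^m, 1, \{2\}^m, 3\}^n, \{2\}^m) \), and note that with the above specialization \( Z(a_{\sigma(0)}, \ldots, a_{\sigma(2n)}) = Z(m, m, \ldots, m) = B \) for every \( \sigma \). Next I would observe that \( S_{2n+1} \) has exactly \( (2n+1)! \) elements, so the sum in \autoref{prop:symins} collapses to \( (2n+1)!\, B \). Symmetric insertion then says \( (2n+1)!\, B \in \pi^\wt \Q \), and dividing through by the nonzero rational \( (2n+1)! \) gives \( B \in \pi^\wt \Q \).

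The only remaining step is to evaluate the weight for this specialization. Directly from the definition of \( Z \), the weight is \( \wt = 4n + 2 \sum_{i=0}^{2n} a_i \) (two for each \( 2 \) in each block \( \{2\}^{a_i} \), plus the four contributed by each \( 1,3 \) pair). With \( a_i = m \) for all \( i \), this simplifies to \( \wt = 4n + 2m(2n+1) \), matching the exponent in the statement.

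No real obstacle is expected here, since the whole argument is a clean specialization of \autoref{prop:symins}; the only thing to double-check is that \( (2n+1)! \) is genuinely nonzero and rational (which it is) so that dividing preserves membership in \( \pi^\wt \Q \).
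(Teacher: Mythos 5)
Your proof is correct and follows exactly the same route as the paper: specialize all \( a_i \) to \( m \), observe that every term in the symmetric insertion sum equals \( B \), collapse the sum to \( (2n+1)!\,B \), divide by \( (2n+1)! \), and compute the weight. There is no gap and no meaningful deviation from the paper's argument.
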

		
	\printbibliography

\end{document}